\newtheorem{theorem}{Theorem}[section]
\newtheorem{proposition}{Proposition}[section]
\newtheorem{lemma}{Lemma}[section]
\theoremstyle{remark}
\newtheorem{remark}{Remark}[section]
\newtheorem{definition}{Definition}[section]
\def \Z{\mathbb Z}
\def \R{\mathbb R}
\def \C{\mathbb C}
\def \M{\rm Mod}
\theoremstyle{plain}      
\renewcommand{\d}{\mathrm{d}}
\newcommand{\g}{\mathfrak{g}}
\renewcommand{\hom}{\operatorname{Hom}}
\renewcommand{\phi}{\varphi}
\newcommand{\mo}{\mathcal{M}}
\newcommand{\tr}{\operatorname{Tr}}
\newcommand{\hol}{\operatorname{Hol}}
\newcommand{\ad}{\operatorname{Ad}}
\renewcommand{\sp}{\operatorname{Sp}}
\newcommand{\rep}{\mathcal{R}}
\newcommand{\su}{\mathrm{SU}_2}
\renewcommand{\span}{\operatorname{Span}}
\title{The first Johnson subgroups act ergodically on $\su$-character varieties}
\author{Louis Funar}
\address{Institut Fourier BP 74, UMR 5582,  Universit\'e Grenoble I, 38402
Saint-Martin-d'H\`eres Cedex, France }
\email{funar@fourier.ujf-grenoble.fr}
\author{Julien March\'e}
\address{
Centre de Math\'ematiques Laurent Schwartz,
\'Ecole Polytechnique
Route de Saclay, 91128 Palaiseau Cedex,
France
}
\email{marche@math.polytechnique.fr}
\begin{document}
\maketitle
\begin{abstract}
We show that the first Johnson subgroup of the mapping class group 
of a surface $\Sigma$ of genus greater than one acts ergodically 
on the moduli space of representations of $\pi_1(\Sigma)$ in $\su$. Our 
proof relies on a local description of the latter space around the 
trivial representation and on the Taylor expansion of trace functions. 

\vspace{0.2cm}
MSC Classification: 58D29, 58F11, 57M99. 

\end{abstract}

\section{Introduction and statements} 
Let $\Sigma$ be a compact  orientable surface 
and $G$ a compact semi-simple Lie group.
The space of  all homomorphisms  
${\rm Hom}(\pi_1(\Sigma),G)$ will be denoted 
$\rep(\Sigma,G)$. We will consider then the quotient 
$\mo(\Sigma, G)={\mathcal R}(\Sigma, G)/G$   by the 
conjugacy  $G$-action. 
Let $\M(\Sigma)$ denote the mapping class group of the surface 
$\Sigma$, namely the group of isotopy classes of homeomorphisms 
preserving the orientation of $\Sigma$ fixing the boundary. 
The group ${\rm Aut}^+(\pi_1(\Sigma))$ of automorphisms of $\pi_1(\Sigma)$ 
preserving the orientation acts on ${\mathcal R}(\Sigma,G)$ 
by left composition and induces an action 
of the mapping class group $\M(\Sigma)$ on $\mo(\Sigma,G)$.

Recall that, if $\Sigma$ is closed and orientable then 
the dense and open subset of non-singular points of $\mo(\Sigma,\su)$ 
has a $\M(\Sigma)$-invariant symplectic structure, 
which was defined by Goldman in \cite{Go1}. 
This induces a volume form  on the non-singular part of $\mo(\Sigma,\su)$  
and thus a  $\M(\Sigma)$-invariant measure on $\mo(\Sigma,\su)$. 

The main purpose of this paper is to understand the dynamical properties 
of the $\M(\Sigma)$  action, with respect to this invariant measure. 
The first result in this direction is due to Goldman (see \cite{Go2}) 
who proved that $\M(\Sigma)$ acts ergodically on $\mo(\Sigma,\su)$. 
This ergodicity statement was further extended to all compact connected 
Lie groups by Pickrell and Xia (\cite{PX}). 

\begin{definition}
The first Johnson subgroup $\mathcal K(\Sigma)$ is the subgroup of $\M(\Sigma)$ 
generated by the Dehn twists along separating simple curves 
on $\Sigma$.  
\end{definition}

Johnson proved (see \cite{J}) that $\mathcal K(\Sigma)$ 
is a normal subgroup of infinite index of the 
Torelli subgroup of $\M(\Sigma)$, which is the subgroup 
of mapping classes of homeomorphisms acting trivially on the 
surface homology.

\begin{theorem}\label{main}
Let $\Sigma$ be a closed orientable surface of genus $g\geq 2$. 
Then the Johnson subgroup $\mathcal K(\Sigma)$ acts ergodically on 
$\mo(\Sigma,\su)$. 
\end{theorem}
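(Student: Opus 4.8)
The strategy is to prove that every $\mathcal{K}(\Sigma)$-invariant function $f\in L^2(\mo(\Sigma,\su))$ is almost everywhere constant, and to reduce all the genuine work to the trivial representation $\rho_0$, which is a fixed point of the entire $\M(\Sigma)$-action. The mechanism passing from a local statement at $\rho_0$ to global constancy uses two facts: that the trace functions $\rho\mapsto\tr\rho(\alpha)$ and their products span a dense subalgebra, and that $\mathcal{K}(\Sigma)$ is normal in $\M(\Sigma)$ (a conjugate of a separating twist is again a separating twist). Normality guarantees that the space $\mathcal J$ of $\mathcal{K}(\Sigma)$-invariant $L^2$-functions is preserved by $\M(\Sigma)$. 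Granting the local result that every element of $\mathcal J$ is a.e. constant on a fixed neighborhood $U$ of $\rho_0$, one obtains an $\M(\Sigma)$-invariant functional $c\colon\mathcal J\to\C$ recording this constant; for $f\in\ker c$, the essential support $E$ of $|f|^2$ is $\mathcal{K}(\Sigma)$-invariant and meets $U$ in a null set, and so does every translate $g(E)$, since $|f\circ g^{-1}|^2$ again lies in $\mathcal J$ and vanishes near the fixed point $\rho_0$. As $\M(\Sigma)$ is countable, the saturation $\bigcup_g g(E)$ still meets $U$ in a null set; being $\M(\Sigma)$-invariant of positive measure it has full measure by Goldman's ergodicity theorem, a contradiction unless $E$ is null. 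Thus local constancy propagates to global constancy.

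It remains to prove the local statement, where the two ingredients announced in the abstract enter. Working on the full-measure smooth locus of irreducible representations, which carries Goldman's volume, I would use holonomy coordinates $A_i=\exp u_i$, $B_i=\exp v_i$ with $u_i,v_i\in\mathfrak{su}_2$ small, subject to the surface relation, whose leading term realizes $\mo(\Sigma,\su)$ near $\rho_0$ as the quotient by $\su$ of the quadratic cone $\{\,\omega\in H^1(\Sigma;\mathfrak{su}_2):[\omega\smile\omega]=0\in H^2(\Sigma;\mathfrak{su}_2)\,\}$. For a separating curve $\gamma$ bounding a subsurface $S$, the twist $T_\gamma$ fixes the holonomies outside $S$ and conjugates those inside by $c=\rho(\gamma)$. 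The decisive point, reflecting the vanishing of the Johnson homomorphism on $\mathcal{K}(\Sigma)$, is that $\gamma$ being separating forces $c$ to be a product of commutators, so $c=\exp\xi$ with $\xi$ of quadratic order in $(u,v)$; moreover $c$ depends only on the fixed side, so $T_\gamma^{\,n}$ conjugates the $S$-holonomies by $c^{\,n}=\exp(n\xi)$, acting on the inside coordinates by $\ad_{\exp(n\xi)}$ — the identity to first order and nontrivial only through the cubic Taylor terms.

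For representations where $\|\xi\|$ is an irrational multiple of $\pi$, a full-measure condition, the rotations $\ad_{\exp(n\xi)}$ equidistribute, so any invariant $f$ is automatically invariant under the whole one-parameter group $R_\gamma^t=\ad_{\exp(t\xi)}$ of relative rotations of $S$ about the axis $\xi$. I would then run this over a nested family of separating curves cutting off successive subsurfaces and show that the resulting relative-rotation groups generate a subgroup of the relative conjugation group $(\su)^{g}/\su$ acting with full orbits on the fibres of the map sending a representation to its tuple of handle conjugacy classes; combined with the transverse freedom in the cone directions, this would force $f$ to be locally constant at $\rho_0$.

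The main obstacle is precisely this last generation step. Because $\mathcal{K}(\Sigma)$ lies in the Torelli group, its linearization at $\rho_0$ is trivial and even the first Johnson homomorphism vanishes, so all of the usable dynamics is hidden in the higher-order Taylor coefficients of the trace functions; the work is to extract from these genuinely nonlinear flows enough transformations to act ergodically on the local cone model. I expect the careful bookkeeping of the quadratic relation against the cubic twist terms, and the verification that the relative rotations attached to different separating curves share no common invariant, to be the technical heart of the proof, whereas the reduction of the first paragraph and the equidistribution of the third I expect to be comparatively routine.
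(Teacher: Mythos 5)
Your global scheme is essentially sound and runs parallel to the paper's: both proofs exploit normality of $\mathcal K(\Sigma)$ in $\M(\Sigma)$, a local statement at the trivial representation, Goldman's ergodicity theorem for the full mapping class group as the globalizing input, and the Goldman--Xia mechanism (irrational rotation number $\Rightarrow$ twist-invariance forces invariance under the whole Goldman flow almost everywhere). Your propagation-by-essential-supports argument in the first paragraph is a legitimate variant of the paper's argument, which instead takes the $\M(\Sigma)$-invariant open set where the differentials of the trace functions $f_\gamma$, $\gamma$ ranging over an orbit of a finite set of separating curves, span the cotangent space, shows the component containing a punctured neighborhood of the trivial representation is conull, and applies the infinitesimal transitivity lemma of Goldman--Xia.

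The genuine gap is the local generation step, which you correctly flag as the "main obstacle" but do not supply --- and the route you sketch for it would fail. A nested family of separating curves consists of pairwise disjoint curves, and the Goldman flows of disjoint curves Poisson-commute (the Poisson bracket of trace functions is a sum over intersection points), so the group they generate is abelian with orbits of dimension at most the number of curves, i.e.\ at most $2g-3$; this can never act with full orbits on your $(3g-3)$-dimensional fibres, let alone give local transitivity on the $(6g-6)$-dimensional moduli space. One needs many curves that \emph{intersect} each other, and the theorem's actual content is proving that their trace functions have independent differentials near the trivial representation, where all first-order information vanishes. The paper does this with: (i) the exact order-$4$ Taylor expansion $\tr\hol_\gamma F(u)=2+\frac{1}{16}\tr[u'\wedge u']^2+O(|u|^5)$ for separating $\gamma$ (Proposition \ref{taylor}); (ii) a finite set of $g(2g+1)(2g^2+g+1)/2$ curves bounding one-holed tori whose quartic leading terms $g_\gamma$ have differentials spanning the cotangent space of the cone $C$ at every irreducible point modulo the conjugation directions --- proved by an explicit $\mathfrak{su}_2$ computation (Lemma \ref{calcul}) combined with a quadratic-generation lemma resting on the Zariski density of $\sp(2g,\Z)$ in $\sp(2g,\R)$ (Lemma \ref{quadraticgeneration}); and (iii) a uniformity argument (homogeneity of a sum of Gram determinants of degree $36(g-1)$, conical neighborhoods of the reducible locus) to transfer spanning from the leading terms to the honest trace functions on a genuine punctured neighborhood, together with a connectivity remark. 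Your sketch contains none of (i)--(iii) beyond the observation that separating twists act at higher order; without them, the assertion that the relative-rotation groups "share no common invariant" is exactly the theorem, not a verification one can defer.
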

This solves affirmatively Conjecture 1.8 of Goldman from \cite{Go2}. 
Previously the conjecture has been proved only for surfaces of genus $g=1$ with two  
boundary components (for almost all boundary monodromies) following 
a different approach,  by Goldman and Xia (see \cite{GX2}). 

\begin{remark}
It is sufficient to show that the lift of $\mathcal K(\Sigma)$ in ${\rm Aut}^+(\pi_1(\Sigma))$  acts ergodically on 
$\rep(\Sigma,\su)$,  for a closed orientable surface $\Sigma$ of genus 
$g\geq 2$. 
\end{remark}

The proof can be extended with only minor modifications 
to the case where $G=\su\times \su\times\cdots \times \su$ is the 
direct product of $k$ factors. 
Therefore we obtain: 

\begin{theorem}\label{weakmixing}
Let $\Sigma$ be a closed orientable surface of genus $g\geq 2$. 
Then the Johnson subgroup $\mathcal K(\Sigma)$ acts ergodically on 
$\mo(\Sigma,\su)\times \mo(\Sigma,\su)\times \cdots\times \mo(\Sigma,\su)$,
by means of the diagonal action. In particular, the action of 
$\mathcal K(\Sigma)$ on $\mo(\Sigma,\su)$ is 
weakly mixing.  
\end{theorem}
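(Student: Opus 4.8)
The plan is to deduce the statement from an ergodicity result for the product group, combined with the standard characterisation of weak mixing. Recall that a measure-preserving action of a group $\Gamma$ on a probability space $(X,\mu)$ is weakly mixing precisely when the diagonal $\Gamma$-action on $(X\times X,\mu\times\mu)$ is ergodic. Thus the second assertion of the theorem, namely weak mixing of the $\mathcal K(\Sigma)$-action on $\mo(\Sigma,\su)$, is exactly the case $k=2$ of the first, and it suffices to prove the ergodicity of the diagonal action on the $k$-fold product.

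Write $G=\su\times\cdots\times\su$ for the product of $k$ factors. Since a homomorphism into a product group is the same datum as a $k$-tuple of homomorphisms into the factors, there is a natural identification $\rep(\Sigma,G)=\rep(\Sigma,\su)\times\cdots\times\rep(\Sigma,\su)$ which is equivariant for conjugation, and hence an identification $\mo(\Sigma,G)=\mo(\Sigma,\su)\times\cdots\times\mo(\Sigma,\su)$. Under it the diagonal action of $\mathcal K(\Sigma)$ on the right-hand side corresponds to the natural $\mathcal K(\Sigma)$-action on $\mo(\Sigma,G)$, and, since the invariant symplectic form on $\mo(\Sigma,G)$ is built from the sum of the pairings on the $k$ copies of $\mathfrak{su}_2$, it splits as a direct sum, so that the product of the Goldman measures agrees with the invariant measure on $\mo(\Sigma,G)$. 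The theorem therefore reduces to showing that $\mathcal K(\Sigma)$ acts ergodically on $\mo(\Sigma,G)$, which I would establish by repeating the proof of Theorem \ref{main} with $\su$ replaced by $G$, incorporating the minor modifications described below.

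The local analysis at the heart of the proof of Theorem \ref{main} adapts almost directly. The trivial representation of $G$ again carries the richest local structure, and the germ of $\rep(\Sigma,G)$ there is modelled on the quadratic cone in $H^1(\Sigma;\mathfrak g)$ cut out by the cup product, where now $\mathfrak g=\mathfrak{su}_2\oplus\cdots\oplus\mathfrak{su}_2$, so that $H^1(\Sigma;\mathfrak g)=H^1(\Sigma;\mathfrak{su}_2)^{\oplus k}$. The trace functions used to detect the behaviour of an invariant function split along the $k$ summands, so in local coordinates one obtains a $k$-tuple of cohomology classes on which a separating Dehn twist acts by the same transformation on each summand. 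I would then expand an invariant $L^2$ function in a Taylor series in these coordinates and argue, as in the one-factor case, that invariance under the twists forces every non-constant coefficient to vanish.

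The main obstacle lies precisely in this last step, where the diagonal coupling is felt. The monomials occurring in the Taylor expansion of an invariant function now mix components coming from different factors, so the vanishing argument must be carried out inside tensor and symmetric powers that straddle the $k$ copies of $H^1(\Sigma;\mathfrak{su}_2)$, rather than within a single copy. Because a separating twist acts with the same linearisation, hence with the same eigenvalues, on every factor, additional resonances between the factors can appear, and the delicate point is to verify that the constraints produced by several independent separating twists still annihilate all of these cross-terms. I expect this to follow from the same harmonic-analytic input as in the single-factor case, now applied componentwise across the factors; once all non-constant Taylor coefficients are shown to vanish, the invariant function is constant near the trivial representation, and ergodicity on the whole of $\mo(\Sigma,G)$ follows as in the proof of Theorem \ref{main}. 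Specialising to $k=2$ then yields the weak mixing assertion.
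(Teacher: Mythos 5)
Your reduction is sound and matches the paper's intent: weak mixing follows from ergodicity of the diagonal action on two copies, and the $k$-fold product $\mo(\Sigma,\su)\times\cdots\times\mo(\Sigma,\su)$ with the product measure is naturally identified, $\M(\Sigma)$-equivariantly and symplectically, with $\mo(\Sigma,G)$ for $G=\su\times\cdots\times\su$. The paper's entire proof of this theorem is exactly the remark that the proof of Theorem \ref{main} goes through for this $G$ ``with only minor modifications,'' so up to this point you are on the paper's track.

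The gap is in how you propose to carry out those modifications. You describe the core of the argument as expanding an invariant $L^2$ function in a Taylor series and showing its non-constant coefficients vanish; neither the paper nor any workable version of the argument does this, because an invariant function in the ergodicity statement is merely measurable and cannot be Taylor expanded. What the paper expands are the \emph{trace functions} $f_\gamma$ (which are analytic), and this expansion serves only to prove that their differentials span the cotangent space near the trivial representation away from reducibles (Propositions \ref{taylor}, \ref{engendrecone}, \ref{engendremodule}). Ergodicity is then obtained dynamically in Section 5: the Dehn twist along $\gamma\in S$ is the time-$h_\gamma$ map of the Goldman twist flow, hence an irrational rotation on almost every flow orbit, so an invariant measurable function is a.e.\ constant along these orbits; the infinitesimal transitivity Lemma \ref{transitive} of Goldman--Xia together with the spanning property forces it to be a.e.\ constant on a connected invariant open set, whose complement is null by ergodicity of the \emph{full} mapping class group; a bootstrap (Proposition \ref{bootstrap}) finishes the proof. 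For the product case the genuinely new points--which your ``resonance'' paragraph gestures at but does not resolve--are: (i) on $\mo(\Sigma,G)$ the twist acts on a $k$-torus orbit of the product of twist flows by the rotation $(h_\gamma(\rho_1),\dots,h_\gamma(\rho_k))$, and one must check that for a.e.\ $k$-tuple these angles together with $\pi$ are rationally independent, so the twist is ergodic on the torus orbit; (ii) spanning of the cotangent space of the product is achieved by the factorwise trace functions $(\rho_1,\dots,\rho_k)\mapsto\tr\rho_i(\gamma)$, whose differentials live in the $i$-th summand of the cotangent space, so the one-factor Propositions apply verbatim and no ``cross-terms'' ever arise; and (iii) the base case of the bootstrap requires ergodicity of the full mapping class group on $\mo(\Sigma,G)$, which for the product group is the theorem of Pickrell and Xia--an ingredient you never invoke. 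As written, your proposal leaves the actual ergodicity mechanism unproved.
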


\begin{remark}
Although the result is stated here for $G=\su$, 
the proof could be adapted to simply connected compact groups. 
Observe also that the action of $\mathcal K(\Sigma)$ on 
$\mo(\Sigma,\mathrm{U}_1)$ is trivial, contrary to the action of 
$\M(\Sigma)$, which is known to be ergodic. 
\end{remark}

\begin{remark}
One can ask if a similar result holds 
true when $\Sigma$ is a compact surface with boundary 
$\partial \Sigma$. In this case we consider the action of 
$\mathcal K(\Sigma)$ on  the symplectic leaves  
$\mo(\Sigma, \su; (\alpha_i)_{i\in H_0(\partial\Sigma)})$ of 
$\mo(\Sigma,\su)$, consisting of those classes of representations 
whose conjugacy classes on each circle of $\partial \Sigma$ 
are the fixed $\alpha_i$. We have to require that the genus $g\geq 2$ or 
$g=1$ and the number of boundary components is at least $3$.  
Our proof does not seem to extend to this case as we need that the 
trivial representation belongs to the space of representations considered. 
\end{remark}

\begin{remark}
The present proof heavily uses the symplectic structure on 
$\mo(\Sigma,\su)$.  
Notice that when $\Sigma$ is closed but non-orientable 
the space $\mo(\Sigma,\su)$ still admits an invariant 
volume form (actually it is enough to have an invariant measure class)  
and the action of $\M(\Sigma)$ was proved to be 
ergodic in \cite{Pal} for all but a few explicit small surfaces.  
However the present proof does not seem to extend to the
non-orientable case. 
\end{remark}

The proof of Theorem \ref{main} and hence the paper is organized in the 
following way. In the first section, we describe the local structure of the 
representation space around the trivial representation. Then we compute the 
Taylor expansion of trace functions, in particular we show that the first 
non trivial term in the expansion of the trace function of a separating curve 
has order 4.   
In Section 4, we show that these trace functions are generating around 
the trivial representation, in the sense that their differentials generate 
the cotangent space. Then we conclude our proof in Section 5 by an 
argument similar to the one in \cite{GoX}. 

\vspace{0.2cm}
{\bf Acknowledgements:}
The first author was supported 
by the ANR 2011 BS 01 020 01 ModGroup and 
the second author was supported by the ANR-08-JCJC-0114-01. The authors are indebted to W. Goldman for valuable discussions.

 \section{Local structure of representation spaces around the trivial representation}
 We recall the following result due to Goldman and Millson 
(see \cite{gm2}) for general K\"ahler manifolds, which for 
the case of surfaces was already obtained by Arms, Marsden and 
Moncrief \cite{amm}. This case is detailed in the appendix of \cite{gm1}. 
 
 \begin{proposition}
Let $\Sigma$ be a compact Riemann surface and $G$ a compact Lie group. 

Let $P$ be flat principal $G$-bundle over $\Sigma$ and denote by $F(P)$ 
the space of flat connections on $P$. Given a flat connection $A$ on $P$, 
let 
$Z^1(\Sigma,\ad P)$ denote the space of infinitesimal deformations of $A$ 
inside $F(P)$.

Then, there exists an analytical diffeomorphism between a neighborhood
 of $A$ in $F(P)$ and a neighborhood of $0$ in the subset of 
$Z^1(\Sigma,\ad P)$ consisting of $\ad P$-valued 1-forms $\eta$ such 
that $[\eta,\eta]$ is exact. 
 \end{proposition}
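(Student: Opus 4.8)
The plan is to realise both spaces as zero-sets inside $\Omega^1(\Sigma,\ad P)$ and to compare them by solving away the non-harmonic directions. Write $\zeta\in\Omega^1(\Sigma,\ad P)$ for a small perturbation; since $A$ is flat, the connection $A+\zeta$ is flat exactly when its curvature $\Phi(\zeta):=d_A\zeta+\tfrac12[\zeta,\zeta]\in\Omega^2(\Sigma,\ad P)$ vanishes. I would fix a Riemannian metric compatible with the conformal structure on $\Sigma$, let $d_A^{*}$ be the formal adjoint of $d_A$, and introduce the Laplacian $\Delta_A=d_Ad_A^{*}+d_A^{*}d_A$, its Green operator $G$ and the harmonic projector $H$ onto $\mathcal H^2$. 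Hodge theory gives the orthogonal splitting $\Omega^1(\Sigma,\ad P)=Z^1\oplus\operatorname{im}d_A^{*}$, so every small $\zeta$ is written uniquely as $\zeta=\eta+\beta$ with $\eta\in Z^1$ closed and $\beta=d_A^{*}\gamma$ coexact. The idea is that $Z^1$ carries the infinitesimal deformations while the coexact part $\beta$ is forced to be a function of $\eta$.

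First I would solve for the coexact part. Projecting $\Phi(\zeta)=0$ with $1-H$ and $H$, and using that $d_A\beta$ is exact while $d_A^{*}d_A$ annihilates top-degree forms on a surface (so $(1-H)=d_Ad_A^{*}G$ on $\Omega^2$), the flatness equation splits as
\begin{equation*}
\beta=-\tfrac12\,d_A^{*}G\,[\eta+\beta,\eta+\beta],\qquad H[\eta+\beta,\eta+\beta]=0.
\end{equation*}
The right-hand side of the first relation is quadratic in $(\eta,\beta)$; working in a Sobolev (or H\"older) completion and invoking the analytic implicit function theorem, it admits, for $\eta$ small, a unique small solution $\beta=\beta(\eta)$ depending analytically on $\eta$ with $\beta(\eta)=O(\|\eta\|^{2})$, and elliptic regularity keeps $\beta(\eta)$ smooth. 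Hence $\eta\mapsto A+\eta+\beta(\eta)$ is an analytic local diffeomorphism from a neighbourhood of $0$ in $Z^1$ onto its image in the space of connections, which, restricted to the remaining finite-dimensional condition $H[\eta+\beta(\eta),\eta+\beta(\eta)]=0$, is a bijection onto the flat connections near $A$.

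It remains to compare this \emph{perturbed} cone $\{\,\eta\in Z^1:H[\eta+\beta(\eta),\eta+\beta(\eta)]=0\,\}$ with the honest quadratic cone $\{\,\eta\in Z^1:H[\eta,\eta]=0\,\}$, the latter being exactly the condition that $[\eta,\eta]$ be exact (a closed $2$-form on a surface is exact iff its harmonic part vanishes). Since $\beta(\eta)=O(\|\eta\|^{2})$, the obstruction map $\eta\mapsto H[\eta+\beta(\eta),\eta+\beta(\eta)]$ coincides with $\eta\mapsto H[\eta,\eta]$ to quadratic order, so the two cones are tangent at the origin; the content of the proposition is that they are in fact analytically equivalent.

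This last step is the main obstacle, and it is where the Riemann surface (K\"ahler) hypothesis is essential: the higher-order corrections must be absorbed by an analytic change of coordinates on $Z^1$ that straightens the perturbed obstruction into the purely quadratic one. The mechanism is the formality of the differential graded Lie algebra $(\Omega^{*}(\Sigma,\ad P),d_A,[\,\cdot\,,\cdot\,])$, following Goldman--Millson \cite{gm2}: using the $\partial\bar\partial$-lemma one produces a quasi-isomorphism to the cohomology DGLA $(H^{*}(\Sigma,\ad P),0,[\,\cdot\,,\cdot\,])$, whose Maurer--Cartan locus is precisely the quadratic cone, and transporting the Kuranishi construction through it yields the required diffeomorphism. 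On a surface one may alternatively effect this straightening by hand, inductively removing the degree-$k$ terms of $\beta(\eta)$ by means of $G$ and the $\partial\bar\partial$-lemma; I expect the bookkeeping of these higher Massey-type corrections to be the only delicate point.
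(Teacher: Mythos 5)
Your proposal is correct and follows essentially the same route as the paper: the paper does not prove this proposition itself but recalls it from Goldman--Millson \cite{gm2} and Arms--Marsden--Moncrief \cite{amm}, and its own remark describes that proof exactly as you structure it, namely harmonic theory plus the analytic implicit function theorem applied to the Kuranishi map in $\Omega^1(\Sigma,\g)$, with the quadraticity of the cone resting on formality. Your Hodge-theoretic reduction to the perturbed cone is sound, and deferring the final straightening step to the formality argument of \cite{gm2} (detailed for surfaces in the appendix of \cite{gm1}) is consistent with the paper's own treatment, which cites the same sources for the entire statement.
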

 
 We reformulate this result as  it was stated in the corollary 
from (\cite{gm1} p.143), by specializing  to the case where 
$P$ is the trivial bundle and $A$ the trivial connection. 

Given a base point $x$ in $\Sigma$, we denote by $\rep(\Sigma,G)$ 
the variety $\hom(\pi_1(\Sigma,x),G)$ and by 
$\hol:F(P)\to\rep(\Sigma,G)$ the holonomy representation.

\begin{definition}
We denote by $C$ the tangent cone  at the identity representation,   
namely the set of elements $u$ in $H^1(\Sigma,\g)$ satisfying $[u,u]=0$. 
Identifying $H^1(\Sigma,\g)$ with $\hom(H_1(\Sigma,\R),\g)$ we define 
$C_{\rm irr}\subset C$ as the set of surjective maps. 
\end{definition}
 
We have then: 
 
 \begin{proposition}
Let $\Sigma$ be a compact Riemann surface and $G$ a compact Lie group. 
There is an analytic function $F:H^1(\Sigma,\g)\to \Omega^1(\Sigma,\g)$ which satisfies
\begin{enumerate}
\item $F(0)=0$ and $D_0F(u)$ is closed and cohomologous to $u$.  
\item If $[u,u]=0$ then $d F(u)+\frac{1}{2}[F(u),F(u)]=0$. 
\item $F$ maps $C_{\rm irr}$ to flat connections with irreducible monodromy. 
\item The map $\hol\circ F:C\to\rep(\Sigma,G)$ is a real analytic diffeomorphism in the neighborhood of 0.
\item The map $F$ is equivariant with respect to the adjoint action of $G$.
 \end{enumerate}
 \end{proposition}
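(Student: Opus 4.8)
The plan is to produce $F$ as the Kuranishi map attached to a Hodge decomposition, and to read off the five properties from this explicit construction together with the quadratic--cone description furnished by the previous Proposition. Fix a Riemannian metric on $\Sigma$ compatible with the complex structure and let $d^{*}$, $\Delta=dd^{*}+d^{*}d$, the harmonic projection $H$ and the Green operator $\mathcal G$ (so that $\mathcal G\Delta=\Delta\mathcal G=I-H$) act on $\g$-valued forms; identify $H^{1}(\Sigma,\g)$ with the space $\mathcal H^{1}$ of harmonic $\g$-valued $1$-forms. I would then define $F$ implicitly by the Kuranishi equation $F(u)=u-\tfrac12 d^{*}\mathcal G[F(u),F(u)]$. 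Since the nonlinear term is quadratic and $d^{*}\mathcal G$ gains derivatives, the contraction principle (or the implicit function theorem) in suitable Sobolev completions gives a unique small solution depending analytically on $u$, which a majorant--series argument solving $F=u+F_{2}(u)+F_{3}(u)+\cdots$ term by term upgrades to a real-analytic map. Then $F(0)=0$, and differentiating at $0$ annihilates the quadratic term, so $D_{0}F(u)=u$, which is harmonic, hence closed and cohomologous to $u$; this is (1). All of $H,\mathcal G,d,d^{*}$ commute with the adjoint action of constant gauge transformations and the bracket is $\ad$-equivariant, so the recursion is equivariant and $F(\ad_{g}u)=\ad_{g}F(u)$, which is (5).

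The heart is (2). Writing $\eta=F(u)$ and applying $H$ and $d^{*}$ to the defining relation, one checks that $d^{*}\eta=0$ (Coulomb gauge) and $H\eta=u$ hold identically. Next I compute the curvature $\Phi:=d\eta+\tfrac12[\eta,\eta]$. Using $dd^{*}\mathcal G=(I-H)-d^{*}d\mathcal G$ together with the surface-specific fact that there are no $\g$-valued $3$-forms on $\Sigma$ (so $d\mathcal G[\eta,\eta]$, being the differential of a $2$-form, vanishes), the exact and coexact contributions cancel and one is left with $\Phi=\tfrac12 H[\eta,\eta]$, a harmonic $2$-form. Thus $F(u)$ is flat if and only if its sole obstruction, the harmonic class $H[F(u),F(u)]\in H^{2}(\Sigma,\g)$, vanishes. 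It remains to identify the zero germ of this obstruction with the cone $C=\{[u,u]=0\}$: its quadratic part is $H[u,u]$, representing the cup-bracket $[u,u]$, and the assertion that the \emph{full} zero set coincides with $\{[u,u]=0\}$ (equivalently, that the cubic and higher obstructions vanish on the quadratic cone) is exactly the quadratic-cone description of the previous Proposition, whose validity on a Riemann surface rests on Hodge theory. I expect this identification to be the main obstacle, since a priori the higher-order terms of the obstruction map could deform the cone.

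Finally I would deduce (3) and (4) from the previous Proposition. By (2) the restriction $F\colon C\to F(P)$ lands in flat connections lying in Coulomb gauge; the slice $\{d^{*}\eta=0\}$ is transverse to the orbits of the based gauge group, so $\hol$ restricts to a local analytic diffeomorphism from this slice onto a neighbourhood of the trivial representation in $\rep(\Sigma,G)$. Combined with the fact that $F$ analytically parametrises the Coulomb-gauge flat locus by $C$ --- which is precisely the corollary of \cite{gm1} (p.~143) we are reformulating --- the composite $\hol\circ F\colon C\to\rep(\Sigma,G)$ is a real-analytic diffeomorphism near $0$, giving (4). For (3), let $u\in C_{\rm irr}$, so $u\colon H_{1}(\Sigma,\R)\to\g$ is onto. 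Since $\hol\circ F$ is $G$-equivariant by (5), the diffeomorphism of (4) intertwines stabilisers, so the centraliser in $G$ of $\hol(F(u))$ equals the $\ad$-stabiliser of $u$; an element fixing the surjective $u$ must centralise all of $\g$ and hence lie in the centre of $G$. Therefore $\hol(F(u))$ has central centraliser, that is, irreducible monodromy, which is (3).
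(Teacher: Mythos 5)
Your overall strategy coincides with the one the paper itself invokes: the paper offers no proof of this Proposition, but presents it as a reformulation of the corollary of Goldman--Millson \cite{gm1} (p.~143), and its accompanying Remark describes that proof as an application of the implicit function theorem to the Kuranishi map in $\Omega^1(\Sigma,\g)$ using harmonic theory and Sobolev norms --- exactly your setup. Within that framework, most of what you write is correct: the contraction/analyticity argument for $F(u)=u-\tfrac12 d^*\mathcal G[F(u),F(u)]$, properties (1) and (5), the curvature computation (on a surface $d\mathcal G[\eta,\eta]$ is a $3$-form, hence zero, so the curvature of $F(u)$ collapses to the harmonic obstruction $\tfrac12 H[F(u),F(u)]$), and the deduction of (3) and (4) from (2) via the Coulomb slice and equivariance are all standard and sound.

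The genuine gap is exactly where you suspect it, and your proposed fix does not close it. Property (2) requires that the \emph{full} obstruction $H[F(u),F(u)]$, not merely its quadratic part $H[u,u]$, vanish for every $u$ in the cone $C=\{[u,u]=0\}$. You claim this "is exactly" the content of the preceding Proposition, but that Proposition only asserts the existence of \emph{some} analytic diffeomorphism between a neighborhood of the trivial connection in the flat locus and a neighborhood of $0$ in $\{\eta\in Z^1(\Sigma,\g):[\eta,\eta]\ \mathrm{exact}\}$; it says nothing about how that diffeomorphism interacts with the Hodge decomposition or with your particular Kuranishi parametrization $F$. Such an abstract equivalence of germs is perfectly compatible with the zero set of your obstruction map being a \emph{deformed} copy $\phi(C)$ of the cone, with $\phi$ a local diffeomorphism tangent to the identity --- in which case (2) fails for your $F$ even though the preceding Proposition holds. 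Ruling out this deformation is the real content of Goldman--Millson's theorem: one needs formality of the de Rham complex with coefficients in $\ad P$ over a compact K\"ahler manifold (the $dd^c$-lemma, or principle of two types), or, for surfaces, the momentum-map argument of Arms--Marsden--Moncrief \cite{amm}, to show that all higher obstructions vanish identically on the quadratic cone. This is precisely why the paper cites the corollary of \cite{gm1} rather than deriving the present Proposition from the preceding one: the statement under review is a consequence of the \emph{proof} of the preceding Proposition, not of its statement.
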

 
 \begin{remark} The proof of this theorem uses harmonic theory and is 
an application of the implicit function theorem to the Kuranishi map 
in $\Omega^1(\Sigma,\g)$, see \cite{gm1}. The space of 1-forms are 
topologized using Sobolev $s$-norms where $s$ is sufficiently large so 
that the connections are at least $C^2$.
 \end{remark}
 
 \section{Taylor expansion of trace functions}
 Let $\gamma:\R/\Z\to \Sigma$ a parametrized simple curve and 
set $G=\su\subset M_2(\C)$, where $M_2(\C)$ denotes the algebra 
of 2-by-2 matrices with complex entries.
 
 From now on, we will identify $H^2(\Sigma,\g)$ with $\g$ by the evaluation on the fundamental class. We will denote by $\langle\cdot,\cdot\rangle$ the pairing between homology and cohomology. Choose once for all a norm $|\cdot |$ on $H^1(\Sigma,\mathfrak g)$. 
 \begin{proposition}\label{taylor}
 If $\gamma$ is non separating:
$$ \tr \hol_\gamma F(u)=2+\frac{1}{4}\tr\langle\gamma, u\rangle^2+O(|u|^3)$$
If $\gamma$ is separating it decomposes the surface in two parts $\Sigma'\cup_{\gamma} \Sigma''$. Denote by $u=u'+u''$ the decomposition of $u$ in $H^1(\Sigma,\g)=H^1(\Sigma',\g)\oplus H^1(\Sigma'',\g)$. We have
$$ \tr \hol_\gamma F(u)=2+\frac{1}{16}\tr[u'\wedge u']^2+O(|u|^5)$$
  \end{proposition}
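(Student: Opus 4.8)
The plan is to reduce everything to the logarithm of the holonomy and to exploit the special structure of $\su$. By property (2) the connection $F(u)$ is flat exactly when $[u,u]=0$, so I will work on the cone $C$, where $\rho_u:=\hol\circ F(u)$ is a genuine homomorphism $\pi_1(\Sigma)\to\su$ and the stated estimates are read as $u\to 0$ along $C$. Two inputs drive the computation. First, for any loop class $c$ one has $\log\rho_u(c)=-\langle c,u\rangle+O(|u|^2)$: by property (1) the form $F(u)=D_0F(u)+O(|u|^2)$ is closed to first order and cohomologous to $u$, whence $\oint_c F(u)=\langle c,u\rangle+O(|u|^2)$, and the Magnus corrections to the holonomy are of order $\ge 2$ in $u$. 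Second, for traceless $Y\in\g=\mathfrak{su}_2$ the character depends only on $\tr(Y^2)$:
\[
\tr\exp(Y)=2+\tfrac12\tr(Y^2)+\tfrac1{48}\bigl(\tr(Y^2)\bigr)^2+\cdots .
\]
Applying this with $Y=W:=\log\rho_u(\gamma)$ reduces the problem to computing $W$ to the relevant order and shows that $\tr\hol_\gamma F(u)-2$ is a power series in $\tr(W^2)$ alone; this \emph{evenness} is what opens the gap between the leading term and the remainder.

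If $\gamma$ is non-separating it is primitive in $\pi_1(\Sigma)$ and $W=-\langle\gamma,u\rangle+O(|u|^2)$, so $\tr(W^2)=\tr\langle\gamma,u\rangle^2+O(|u|^3)$. The $O(|u|^2)$ correction to $W$ enters $\tr(W^2)$ only through the cross term of order $3$, which gives the first formula with remainder $O(|u|^3)$.

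The separating case is the heart of the matter. Write $\Sigma=\Sigma'\cup_\gamma\Sigma''$ and fix a standard generating system $a_1,b_1,\dots,a_{g'},b_{g'}$ of $\pi_1(\Sigma')$, so that $\gamma=\prod_{i=1}^{g'}[a_i,b_i]$ in $\pi_1(\Sigma')\subset\pi_1(\Sigma)$. Since $\rho_u$ is a homomorphism on $C$,
\[
\rho_u(\gamma)=\prod_{i=1}^{g'}\bigl[\rho_u(a_i),\rho_u(b_i)\bigr].
\]
As $[\gamma]=0$ in $H_1(\Sigma,\R)$, the linear term $\langle\gamma,u\rangle$ vanishes and $W$ starts at order $2$. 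Writing $\rho_u(a_i)=\exp(X_i)$, $\rho_u(b_i)=\exp(Y_i)$ with $X_i=-\langle a_i,u\rangle+O(|u|^2)$ and $Y_i=-\langle b_i,u\rangle+O(|u|^2)$, the Baker--Campbell--Hausdorff expansion of a group commutator gives $\log[\rho_u(a_i),\rho_u(b_i)]=[X_i,Y_i]+O(|u|^3)$, and multiplying the factors together,
\[
W=\sum_{i=1}^{g'}\bigl[\langle a_i,u\rangle,\langle b_i,u\rangle\bigr]+O(|u|^3).
\]
This sum is precisely Goldman's expression for the self--cup--bracket, hence a fixed multiple of $[u'\wedge u']\in H^2(\Sigma,\g)\cong\g$; it depends only on $u'$, since $a_i,b_i\in H_1(\Sigma')$ pair trivially with $u''$. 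Inserting this order-$2$ quantity into the trace identity yields $\tr(W^2)=c\,\tr[u'\wedge u']^2+O(|u|^5)$, and thus the second formula, the coefficient $\tfrac1{16}$ emerging once the normalizations of $\langle\cdot,\cdot\rangle$, of the trace identity, and of Goldman's formula are fixed. The remainder is $O(|u|^5)$ rather than $O(|u|^4)$ exactly because $\tr\hol_\gamma F(u)$ is a series in $\tr(W^2)$: with $W=W^{[2]}+W^{[3]}+\cdots$ the first correction to $\tfrac12\tr\bigl((W^{[2]})^2\bigr)$ is the order-$5$ term $\tr(W^{[2]}W^{[3]})$.

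The main obstacle lies in the bookkeeping of this last step: one must verify that the $O(|u|^2)$ corrections to each $X_i,Y_i$ together with all higher BCH commutators (triple brackets and beyond, and the cross terms of the product) feed into $W$ only at order $\ge 3$, so that the order-$2$ part of $W$ is genuinely $\sum_i[\langle a_i,u\rangle,\langle b_i,u\rangle]$ with no extra contributions; and that this sum is independent of the generating system and equals the topological self--cup--bracket. Both points are in fact robust—the first because an order-$2$ value of $W$ can only see the order-$1$ parts of the $X_i,Y_i$, and the second by Goldman's identification of $\sum_i[\langle a_i,u\rangle,\langle b_i,u\rangle]$ with $u'\cup u'$ paired against the relative fundamental class of $(\Sigma',\partial\Sigma')$. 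This is where the genuine content sits; everything else is the elementary $\su$-trace expansion recorded above.
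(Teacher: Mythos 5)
Your route is genuinely different from the paper's. The paper never uses that $\hol\circ F(u)$ is a homomorphism: it expands the holonomy of the (a priori non-flat) connection $F(u)$ as a series of iterated integrals, kills the odd-order terms via the $\su$ symmetry $\tr(A^{-1})=\tr(A)$ applied to reversing the parametrization of $\gamma$, converts $\int_\gamma F_2(u)$ into $-\frac{1}{2}\int_{\Sigma'}[F_1(u)\wedge F_1(u)]$ by Stokes, and disposes of the leftover fourth iterated integral by a gauge-invariance argument. You instead restrict to the cone $C$, where $\rho_u=\hol\circ F(u)$ is a representation, and exploit the relation $\gamma=\prod_i[a_i,b_i]$ in $\pi_1(\Sigma')$, the BCH expansion, and the fact that the $\su$ character is a function of $\tr(W^2)$ alone. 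Where it applies, your mechanism is sound: the order-$2$ part of $W$ sees only the order-$1$ parts of the $X_i,Y_i$; the sum $\sum_i[\langle a_i,u\rangle,\langle b_i,u\rangle]$ is a fixed multiple of the cup-bracket $[u'\wedge u']$ and depends only on $u'$; and the jump of the remainder to $O(|u|^5)$ comes out exactly as you describe. Your character identity replaces the paper's trace-reversal parity trick; both are $\su$-specific in the same way.

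Two genuine problems remain. First, the restriction to $C$ is a real loss, not a reading convention. The proposition is stated for all $u$ near $0$ ($F$ is defined on all of $H^1(\Sigma,\g)$, and holonomy makes sense for non-flat connections), and this stronger form is what the paper later differentiates: in Equation \eqref{dl} and Proposition \ref{engendremodule} the full differentials $D_u(f_\gamma\circ F)\in H^1(\Sigma,\g)^*$ enter Gram determinants taken relative to a Euclidean structure on the ambient space. An expansion known only on $C$ controls derivatives only in directions tangent to $C$: a term such as $\ell([u\wedge u])$, $\ell$ linear, vanishes identically on $C$ yet has first-order transverse derivatives at points of $C$, so it cannot be excluded by your argument and would wreck the comparison $\Psi=\Phi+O(|u|^{N+1})$. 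Moreover your separating-case argument cannot be repaired within its own terms, since $\rho_u(\gamma)=\prod_i[\rho_u(a_i),\rho_u(b_i)]$ is meaningless when $F(u)$ is not flat. Second, you assert that your computation ``gives the first formula'' and that $\frac{1}{16}$ ``emerges'' from the normalizations, but you never compute the constants; carried out, your expansion gives $2+\frac{1}{2}\tr\langle\gamma,u\rangle^2+O(|u|^3)$ and, since $W^{[2]}=\pm\frac{1}{2}[u'\wedge u']$, the value $2+\frac{1}{8}\tr[u'\wedge u']^2+O(|u|^5)$ --- not the stated $\frac{1}{4}$ and $\frac{1}{16}$. (The discrepancy in fact originates in the paper's own normalization of the iterated-integral formula, which carries a spurious $1/n!$ alongside integration over the simplex, and only the nonvanishing of the leading coefficients matters downstream; but a proof cannot claim to reproduce stated constants it has not checked, and here checking them exposes a conflict.)
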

\begin{proof}
Given a connexion form $\alpha\in \Omega^1(\Sigma,\g)$, we consider the well-known formula:
$$\hol_\gamma \alpha=\sum_{n\ge 0} \frac{1}{n!}\int_{\Delta^n}(\gamma^*\alpha)^n,$$
where $\int_{\Delta^n}\alpha^n=\int_{0<t_1<\cdots t_n<1}\beta(t_1)\cdots\beta(t_n)\d t_1\cdots \d t_n$ and $\alpha=\beta(t)\d t$.

From the identity $\tr(\hol_\gamma\alpha)^{-1}=\tr(\hol_\gamma\alpha)$, we observe that we can invert the parametrization of $\gamma$ without changing the result. Writing $\beta(t)\d t=\gamma^*\alpha$ we compute:
\begin{eqnarray*}
\tr(\hol_\gamma \alpha)&=&\sum_{n\ge 0} \frac{1}{n!}\tr\int_{t_1<\cdots <t_n}\beta(1-t_1)\cdots\beta(1-t_n)\d t_1\cdots \d t_n.\\
&=&\sum_{n\ge 0} \frac{1}{n!}\tr\int_{t_1<\cdots <t_n}\beta(t_n)\cdots\beta(t_1)\d t_1\cdots \d t_n.\\
&=&\sum_{n\ge 0} \frac{(-1)^n}{n!}\tr\int_{t_1<\cdots <t_n}\beta(t_1)\cdots\beta(t_n)\d t_1\cdots \d t_n.
\end{eqnarray*}
This implies that only the even values of $n$ contribute to the sum. Observe that 
$$\tr\int_{\Delta^2}(\gamma^*\alpha)^2=\frac{1}{2}\tr\int \beta(t_1)\beta(t_2)\d t_1 \d t_2=\frac{1}{2}\tr\left(\int_\gamma\alpha\right)^2.$$

We now apply these formulas to $\alpha=F(u)$. Developing $F(u)$ into Taylor series we can write $F(u)=F_1(u)+F_2(u)+O(|u|^3)$ where $F_1(u)$ and $F_2(u)$ are 1-forms satisfying 
\begin{enumerate}
\item $\d F_1(u)=0$.
\item $F_1(u)$ is cohomologous to $u$.
\item $\d F_2(u)+\frac{1}{2}[F_1(u),F_1(u)]=0$.
\end{enumerate}

As $F(u)=O(|u|)$ we derive the expression:
$$
\tr(\hol_\gamma F(u))=2+\frac{1}{4}\tr\left(\int_\gamma F(u)\right)^2+\frac{1}{24}\tr\int_{\Delta^4}\gamma^* F(u)+O(|u|^5).$$
 As $F_1(u)$ is cohomologous to $u$ we have $\int_\gamma F_1(u)=\langle\gamma,u\rangle$. This proves the first part of the proposition. 
Suppose now that $\gamma$ separates and denote by $\Sigma'$ the submanifold of $\Sigma$ with $\partial \Sigma'=\gamma$. 

We have $\int_\gamma F_1(u)=0$ and $\int_\gamma F_2(u)=\int_{\Sigma'}dF_2(u)=-\frac{1}{2}\int_{\Sigma'}[F_1(u)\wedge F_1(u)]$. 
This shows the formula
$$\tr(\hol_\gamma F(u))=2+\frac{1}{16}\tr\left(\int_{\Sigma'} [F_1(u)\wedge F_1(u)]\right)^2+\frac{1}{24}\tr\int_{\Delta^4}\gamma^*F_1(u)^4+O(|u|^5).$$

To conclude we observe that the left hand side is invariant by a gauge transformation, that is we can replace $F(u)$ by $gF(u)g^{-1}-(\d g) g^{-1}$ for some $g:\Sigma\to G$. Writing $g=\exp(\xi)$ for some $\xi:\Sigma\to \g$, we compute that an infinitesimal gauge transformation maps $F_1(u)$ to $F_1(u)-\d \xi$. 
We conclude that the right hand side is invariant by such transformations and hence depends only on the cohomology class $u$ of $F_1(u)$. 
Taking a 1-form $\alpha'_1$ cohomologous to $u$ and such that $\gamma^*\alpha'_1=0$, we get the result of the proposition. 
\end{proof} 

\section{Non-separating trace functions around the trivial representation}

For any oriented separating curve $\gamma\subset \Sigma$, denote by $g_\gamma: H^1(\Sigma,\g)\to \R$ the map defined by 
$$g_\gamma(u)=\tr[u'\wedge u']^2$$
 where $\Sigma'$ is the subsurface of $\Sigma$ so that $\partial \Sigma'=\gamma$ and $u=u'+u''$ is the decomposition of $u$ in $H^1(\Sigma,\g)=H^1(\Sigma',\g)\oplus H^1(\Sigma'',\g)$. 

Identifying $H^1(\Sigma,\g)$ to $\hom(H_1(\Sigma,\R),\g)$ we define $C_{\rm irr}\subset C$ as the subset of surjective elements. 
\begin{proposition}\label{engendrecone}
For any surface $\Sigma$ of genus $g>1$ there is a finite set of separating curves $S$ of cardinality $g(2g+1)(2g^2+g+1)/2$ and a neighborhood $U$ of $0$ in $H^1(\Sigma,\g)$ such for all $u$ in $C_{\rm irr}\cap U$ and $v\in H^1(\Sigma,\g)$ we have:
$$ [v,u]=0\text{ and }D_u g_\gamma(v)=0,\forall \gamma\in S
\quad\Rightarrow\quad
 v=[\xi,u]\textrm{ for some }\xi\in\g$$
\end{proposition}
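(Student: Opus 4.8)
The plan is to reduce the statement to a problem of linear algebra governed by the Gram form of $u$, and then to solve it by a genericity argument in the choice of $S$.

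First I would make the differential $D_u g_\gamma$ explicit. Identify $\g$ with $\R^3$ so that $[\,\cdot\,,\cdot\,]$ becomes the cross product $\times$ and $\tr(XY)$ is a negative multiple of the Euclidean product $\langle X,Y\rangle$. If $\gamma$ bounds a one-holed torus $\Sigma'$ carrying the symplectic plane $\Pi=\langle x,y\rangle\subset H_1(\Sigma,\R)$, then $[u'\wedge u']\in H^2(\Sigma',\g)\cong\g$ equals $u(x)\times u(y)$ up to a constant, so by Proposition \ref{taylor} the quartic leading term is
\[
 g_\gamma(u)=\tr[u'\wedge u']^2=-c\,\bigl|u(x)\times u(y)\bigr|^2=-c\,\det\begin{pmatrix} h_u(x,x) & h_u(x,y)\\ h_u(x,y) & h_u(y,y)\end{pmatrix},
\]
where $h_u(x,y)=\langle u(x),u(y)\rangle$ is the symmetric form on $H_1(\Sigma,\R)$ pulled back by $u$ and $c>0$. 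Thus $g_\gamma$ is, up to scale, the $2\times2$ minor of $h_u$ along $\Pi$; differentiating gives $D_ug_\gamma(v)=-c\,M_{x,y}(h_u,k)$, where $k=D_vh_u$ is the symmetric form $k(x,y)=\langle v(x),u(y)\rangle+\langle u(x),v(y)\rangle$ and $M_{x,y}(h,k)=k(x,x)h(y,y)+h(x,x)k(y,y)-2h(x,y)k(x,y)$ is the linearisation of the minor.

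Next I would record the equivalence that converts the conclusion into a statement about $k$. Since $h_u$ is exactly the invariant of $u$ under the conjugation (i.e.\ $\mathrm{SO}(3)$) action, for surjective $u$ one has $k=D_vh_u=0$ if and only if $v=[\xi,u]$ for some $\xi\in\g$: indeed $k=0$ forces $\ker u\subseteq\ker v$, so $v=T\circ u$ for a linear $T\colon\R^3\to\R^3$, and symmetry of $k$ gives $T+T^{*}=0$, i.e.\ $T=\xi\times(\,\cdot\,)$. So the desired conclusion $v=[\xi,u]$ is equivalent to $k=0$, while the hypothesis $[v,u]=0$ merely places $v$ in the tangent cone $T_uC$, which contains every $[\xi,u]$. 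The proposition therefore reduces to showing that the functionals $k\mapsto M_{x,y}(h_u,k)$, for $\gamma\in S$, have trivial common kernel on symmetric forms. Equivalently, writing $k\mapsto M_{x,y}(h,k)=\langle\theta_{x,y},k\rangle$ with $\theta_{x,y}=h(y,y)\,x\odot x+h(x,x)\,y\odot y-2h(x,y)\,x\odot y\in\mathrm{Sym}^2H_1(\Sigma,\R)$, it suffices that the tensors $\theta_\gamma$ span $\mathrm{Sym}^2H_1(\Sigma,\R)$, a space of dimension $g(2g+1)$.

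I would then choose $S$ to force this spanning. Because $g\geq2$, every rank-two symplectic summand of $H_1(\Sigma,\Z)$ is the first homology of an embedded one-holed torus, whose boundary is automatically separating; since $\mathrm{Sp}(2g,\Z)$ acts transitively on such summands, a large generic family of planes $\Pi$ is realised by genuine separating curves. I would take $S$ to consist of the curves attached to all pairs drawn from a configuration of $g(2g+1)+1$ summands in general position, which gives the stated cardinality $\binom{g(2g+1)+1}{2}=g(2g+1)(2g^2+g+1)/2$. The number $g(2g+1)+1$ is one more than $\dim\mathrm{Sym}^2H_1(\Sigma,\R)$, so that a general configuration places the associated tensors in linearly general position and makes the $\theta_\gamma$ span; taking all pairs builds in enough redundancy that no delicate case analysis is needed.

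The hard part will be the uniformity in $u$. The tensors $\theta_\gamma$ depend on $h_u$, hence on $u$, and I must exhibit a \emph{single} finite set $S$ for which spanning holds simultaneously at every irreducible $u$ near $0$. Since $h_u$ ranges over all rank-three positive forms as $u$ varies (even after rescaling), shrinking $U$ does not help: the spanning must be established for all such $h$ at once, the role of the generous count being to make the non-spanning locus in $h$-space a proper subvariety missed by the rank-three forms. Two subsidiary points also need care: realising the chosen planes by actual separating curves forces the summands to be integral and symplectic, which must be reconciled with general position; and the entire construction rests on the existence of one-holed tori, i.e.\ on $g\geq2$, which is precisely the hypothesis.
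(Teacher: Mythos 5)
Your reduction is sound as far as it goes: your formula $D_ug_\gamma(v)=-c\,M_{x,y}(h_u,k)$ is exactly the paper's $D_u g_\gamma(v)=4\tr([P(u)\wedge P(u)][P(u)\wedge P(v)])$ rewritten via $\langle a\times b,c\times d\rangle=\langle a,c\rangle\langle b,d\rangle-\langle a,d\rangle\langle b,c\rangle$, and your observation that for surjective $u$ one has $D_vh_u=0$ if and only if $v=[\xi,u]$ is a clean conceptual substitute for the computational core of the paper's Lemma \ref{calcul}. The problems are concentrated in the final step, the choice of $S$. First, your construction of $S$ is not well defined: a \emph{pair} of rank-two symplectic summands does not determine a separating curve; each summand determines one curve, so a configuration of $2g^2+g+1$ summands yields $2g^2+g+1$ curves, not $\binom{2g^2+g+1}{2}$ of them. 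The agreement of your binomial with the stated cardinality is numerology: in the paper, $g(2g+1)(2g^2+g+1)/2=\binom{g(2g+1)+1}{2}$ arises as the dimension of the space of \emph{quadratic forms on} $S^2H_1(\Sigma,\R)$ --- the space in which the biquadratic function $(x,y)\mapsto D_ug_\gamma(v)$ lives --- not as a count of pairs of planes.

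Second, and fatally, the uniformity in $u$ that you yourself call ``the hard part'' is left unproved, and general position cannot supply it. For any fixed finite $S$, the locus of forms $h$ for which your tensors $\theta^h_\gamma$ fail to span is Zariski closed, and it \emph{always} contains every $h$ of rank at most $2$: if $h=a\otimes a+b\otimes b$, then $k=a\odot b$ pairs to zero with every $\theta^h_{x,y}$, since $a$ vanishes on $a(y)x-a(x)y$ and $b$ on $b(y)x-b(x)y$. These degenerate forms lie in the closure of the rank-three ones, and the rank-three locus is neither closed nor compact (even projectively), so knowing that the bad locus is a proper subvariety says nothing about whether it meets the rank-three forms you must handle; no compactness or semicontinuity argument closes this gap. (Your claim that $h_u$ sweeps out \emph{all} rank-three positive forms is also false --- $[u\wedge u]=0$ forces the three components of $u$ to be pairwise orthogonal for the intersection form, which constrains $h_u$ --- but this only shrinks the set of $h$'s and does not repair the argument.) The paper escapes this trap by a mechanism absent from your proposal: Lemma \ref{quadraticgeneration} produces a single, $u$-independent ``quadratically generating'' set $S$, via Zariski density of $\operatorname{Sp}(2g,\Z)$ in $\operatorname{Sp}(2g,\R)$ and transitivity on symplectic pairs, such that any quadratic form on $S^2H_1(\Sigma,\R)$ vanishing at the points $x_i\otimes y_i$ vanishes identically. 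Applied to $Q_{u,v}$ this converts the finite hypothesis into the identity $Q_{u,v}\equiv 0$ for \emph{every} $u$ simultaneously, after which the pointwise algebraic Lemma \ref{calcul} concludes. Your framework could be completed in exactly the same way: $(x,y)\mapsto M_{x,y}(h_u,k)$ is itself a quadratic form on $S^2H_1(\Sigma,\R)$, so quadratic generation would force it to vanish for all $(x,y)$, and then $k=0$ follows by decomposing $H_1(\Sigma,\R)=\ker h_u\oplus W$ and checking each piece. But that detection lemma is the heart of the matter, and it is the piece your proposal is missing.
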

This amounts to say that there is a neighborhood $V$ of 0 in $C/G$ such that the derivatives of the functions $g_{\gamma}$ for $\gamma$ in $S$ generate the cotangent space at every surjective $u\in V$.  
The reason is that for any $u\in C_{\rm irr}$ we have:
$$T_u (C_{\rm irr}/G)=\{v\in H^1(\Sigma,\g)\text{ such that  }[u\wedge v]=0\}/\{[\xi\wedge u]\text{ for }\xi\in H^0(\Sigma,\g)\}.$$

\begin{proof}
Let $\omega$ denotes the intersection product on $H_1(\Sigma,\R)$. Let $\gamma$ be a separating curve, $\Sigma',\Sigma''$ the corresponding subsurfaces and write $u=P(u)+(u-P(u))$ the decomposition of any $u$ in $H^1(\Sigma,\g)=H^1(\Sigma',\g)\oplus H^1(\Sigma'',\g)$. 
  
A direct computation shows:
 $$D_u g_\gamma(v)=4\tr([P(u)\wedge P(u)][P(u)\wedge P(v)])$$
Suppose that this quantity vanishes for all curves $\gamma$ bounding a 
1-holed torus $\Sigma'$. 
It implies that for all rank 2 lattices $\Lambda\subset H_1(\Sigma,\Z)$ 
such that $\Lambda\oplus \Lambda^{\bot_\omega}=H_1(\Sigma,\Z)$ 
(where $\Lambda^{\bot_\omega}$ is the symplectic orthogonal of $\Lambda$) 
we have
$$\tr([P_{\Lambda}(u)\wedge P_{\Lambda}(u)][P_{\Lambda}(u)\wedge P_{\Lambda}(v)])=0$$
where $P_\Lambda$ is the projection on $\hom(\Lambda,\g)$ 
parallel to $\hom(\Lambda^{\bot_\omega},\g)$.

Take $(x,y)$ a symplectic base of $\Lambda$ and denote by 
$p_{\Lambda}$ the symplectic projection of $H_1(\Sigma,\R)$ onto 
$\Lambda\otimes\R$. 
Identifying $u$ and $v$ to elements in $\hom(H_1(\Sigma,\R),\g)$, $P_\Lambda(u)$ and $P_\Lambda(v)$ are identified with $u\circ p_\Lambda$ and $v\circ p_\Lambda$ respectively.

Moreover, $[u\wedge v]=c\circ (u\otimes v) \circ \omega^{-1}$ where $\omega^{-1}\in H_1(\Sigma,\R)^{\otimes 2}$ represents the inverse symplectic product and $c:\g\otimes \g\to \g$ stands for the Lie bracket. 
We deduce from this formula the identity:
$$[P_\Lambda(u)\wedge P_\Lambda(v)]=[u(x),v(y)]-[u(y),v(x)]$$

Let $Q_{u,v}:H^1(\Sigma,\R)\times H^1(\Sigma,\R)\to\R$ be the map defined by 
$$Q_{u,v}(x,y)=\tr([u(x),u(y)]\left([u(x),v(y)]-[u(y),v(x)]\right))$$

We use the following lemma in order to reduce the statement to prove the vanishing of $Q_{u,v}$. We postpone its proof to the end of the section as we could not find a proof of it avoiding computations. 
\begin{lemma}\label{calcul}
Let $u,v\in H^1(\Sigma,\mathfrak g)$ such that 
$u$ is surjective, $[u\wedge u]=[u\wedge v]=0$ and $Q_{u,v}= 0$.
Then there exists $\xi\in \g$ so that $v=[\xi\wedge u]$.
\end{lemma}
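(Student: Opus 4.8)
The plan is to transport the whole statement into the Lie algebra $\g=\mathfrak{su}_2$, which I identify with $(\R^3,\times)$ so that the bracket becomes the cross product and $\tr(XY)$ is a negative multiple of the Euclidean inner product $\langle\cdot,\cdot\rangle$. Viewing $u,v$ as elements of $\hom(H_1(\Sigma,\R),\g)$, the surjectivity of $u$ says precisely that the vectors $u(x)$ span $\R^3$; write $K=\ker u$. Using the quadruple product identity $(a\times b)\cdot(c\times d)=\langle a,c\rangle\langle b,d\rangle-\langle a,d\rangle\langle b,c\rangle$, the hypothesis $Q_{u,v}=0$ becomes, up to a positive constant, the biquadratic identity
\[
(u(x)\times u(y))\cdot\big(u(x)\times v(y)-u(y)\times v(x)\big)=0\qquad\text{for all }x,y.
\]
Everything below is extracted from this single relation together with surjectivity; the conditions $[u\wedge u]=[u\wedge v]=0$ only record that $v$ is a tangent vector to $C/G$ at $u$ and are consistent with the conclusion, since a coboundary satisfies $[u\wedge[\xi\wedge u]]=\tfrac12\,\xi\times[u\wedge u]=0$.

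First I would show that $v$ annihilates $K=\ker u$. The idea is to polarize the identity above in its second slot along $K$: since $u(k)=0$ for $k\in K$, replacing $y$ by $y+k$ changes only the factors containing $v$, and subtracting the two instances leaves
\[
(u(x)\times u(y))\cdot\big(u(x)\times v(k)\big)=0\qquad\text{for all }x,y\in H_1(\Sigma,\R),\ k\in K.
\]
Because $u$ is surjective, $u(x)$ and $u(y)$ run over all of $\R^3$, so with $w=v(k)$ this reads $\langle p,p\rangle\langle q,w\rangle-\langle p,w\rangle\langle q,p\rangle=0$ for all $p,q\in\R^3$. Fixing a unit vector $p$ and letting $q$ vary shows $w$ is parallel to $p$; letting $p$ vary forces $w=0$. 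Hence $v(k)=0$ for every $k\in K$, and therefore $v$ factors through $u$: there is a unique linear endomorphism $T$ of $\R^3$ with $v=T\circ u$.

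Next I would prove that $T$ is skew-symmetric. Substituting $v=T\circ u$ and writing $p=u(x),\,q=u(y)$ (which again range over all of $\R^3$), the identity becomes, after expanding,
\[
\langle p,p\rangle\,\langle q,Tq\rangle+\langle q,q\rangle\,\langle p,Tp\rangle-2\langle p,q\rangle\,\langle p,T_s q\rangle=0,
\]
where $T_s=\tfrac12(T+T^{t})$ is the symmetric part of $T$; note that only $T_s$ appears. Testing this on an orthonormal basis gives $\langle e_i,T_se_i\rangle+\langle e_j,T_se_j\rangle=0$ for $i\neq j$, whence all diagonal entries of $T_s$ vanish, and testing on pairs such as $e_1$ and $\cos\theta\,e_2+\sin\theta\,e_3$ kills the off-diagonal entries. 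Thus $T_s=0$ and $T$ is skew. Since every skew endomorphism of $\R^3$ has the form $w\mapsto \xi\times w$ for a unique $\xi\in\R^3=\g$, we obtain $v(x)=\xi\times u(x)=[\xi,u(x)]$, that is $v=[\xi\wedge u]$, as desired.

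The genuinely computational step — and, as the authors remark, the unavoidable one — is converting the single biquadratic relation $Q_{u,v}=0$ into these two clean linear-algebra statements. The two structural facts that make it work are the surjectivity of $u$, which lets me treat $u(x)$ and $u(y)$ as independent free vectors in $\R^3$, and that $\dim\g=3$, so the quadruple-product identity for $\mathfrak{su}_2\cong\R^3$ closes up and pins down $T$ completely. I expect the only real obstacle to be the bookkeeping of the polarizations; there is no conceptual difficulty once the problem is moved to $\R^3$.
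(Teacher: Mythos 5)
Your proof is correct, and while it establishes exactly the same two structural facts as the paper's argument, it gets there by a genuinely different and arguably cleaner route. The paper fixes a cyclic basis $(\xi_i)_{i\in\Z_3}$ of $\g$ and a basis $(e_l)$ of $H_1(\Sigma,\R)$ adapted to $u$ (so that $u=\sum_{i\in\Z_3}e_i^*\otimes\xi_i$), expands $Q_{u,v}(x,y)$ as an explicit biquadratic polynomial in the coordinates $x_l,y_l$, and extracts monomial coefficients: the coefficient of $x_i^2y_{i+1}y_l$ for $l\geq 3$ gives $v_i^l=0$ (this is your statement that $v$ annihilates $\ker u$), and the coefficients of $x_i^2y_{i+1}^2$ and $x_i^2y_{i+1}y_{i-1}$ give that the remaining $3\times 3$ matrix $(v_i^j)$ is antisymmetric (your $T_s=0$); it then concludes by a dimension count, the solution space being at most $3$-dimensional and already containing the $3$-dimensional space of coboundaries $[\xi\wedge u]$. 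You replace the coefficient extraction by two invariant manipulations --- polarization of $Q_{u,v}=0$ in the second slot along $\ker u$, and Lagrange's quadruple-product identity after factoring $v=T\circ u$ --- and you replace the dimension count by the explicit identification of skew endomorphisms of $\R^3$ with cross products, which hands you $\xi$ directly. What your route buys: no choice of basis in $H_1(\Sigma,\R)$, a constructive $\xi$, and it makes visible that only the surjectivity of $u$ and $Q_{u,v}=0$ are actually used (the hypotheses $[u\wedge u]=[u\wedge v]=0$ play no role; the same is in fact true of the paper's computation, where the mutual orthogonality of the $u_i$ deduced from $[u\wedge u]=0$ is never used, only their linear independence). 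What the paper's route buys is mainly that the verification is a single mechanical expansion with the solution space exhibited in explicit matrix form; the underlying linear algebra in $\mathfrak{su}_2\cong\R^3$ is the same.
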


It remains to show that the vanishing of $Q_{u,v}$ can be detected 
by a finite number of rank 2 symplectic lattices in 
$H_1(\Sigma,\Z)$. This is the content of the following lemma.

Let  $S=\{x_i\otimes y_i, i\in I\}\subset S^2H_1(\Sigma,\Z)$ be a set 
of vectors satisfying $\omega(x_i,y_i)=1$.  
We say that $S$ is quadratically generating if any quadratic form on $S^2 H_1(\Sigma,\R)$ vanishing on $S$ vanishes everywhere. 

\begin{lemma}\label{quadraticgeneration}
There exists quadratically generating sets $S$. Moreover one can find such sets with cardinality $g(2g+1)(2g^2+g+1)/2$.
\end{lemma}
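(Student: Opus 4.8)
The plan is to translate quadratic generation into a linear-spanning statement and then to establish that statement by $\mathrm{GL}$-representation theory together with two density arguments.

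First I would reformulate. Write $V=S^2H_1(\Sigma,\R)$ and let $x\cdot y\in V$ denote the symmetric product. A quadratic form on $V$ is an element of $S^2V^*$, and its value at $w\in V$ is the pairing with $w\cdot w\in S^2V$; as the pairing $S^2V^*\times S^2V\to\R$ is perfect, a family $S=\{x_i\cdot y_i\}$ is quadratically generating exactly when the squares $(x_i\cdot y_i)^2\in S^2V$ span $S^2V=S^2(S^2H_1(\Sigma,\R))$. Since $\dim S^2H_1(\Sigma,\R)=\binom{2g+1}{2}=g(2g+1)$, we get $\dim S^2(S^2H_1(\Sigma,\R))=\binom{g(2g+1)+1}{2}=g(2g+1)(2g^2+g+1)/2$, so a basis extracted from any spanning family of such squares has precisely the announced cardinality. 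Thus everything reduces to producing integral symplectic pairs, with $\omega(x_i,y_i)=1$, whose squares span $S^2(S^2H_1)$.

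The heart is the unconstrained statement $\span\{(x\cdot y)^2:x,y\in H_1(\Sigma,\R)\}=S^2(S^2H_1(\Sigma,\R))$. This span is a $\mathrm{GL}(H_1(\Sigma,\R))$-subrepresentation, and by the plethysm $S^2(S^2W)\cong S_{(4)}W\oplus S_{(2,2)}W$ the ambient space is the sum of two non-isomorphic irreducibles, both nonzero because $\dim H_1=2g\ge4$; hence any subrepresentation with nonzero projection onto each summand is the whole space. The projection onto $S_{(4)}W=S^4W$ is nonzero because the multiplication map $m\colon S^2(S^2W)\to S^4W$ (with $\ker m=S_{(2,2)}W$) sends $(x\cdot y)^2$ to $x^2y^2$, and such products span $S^4W$. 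For the projection onto $S_{(2,2)}W$ I would fix a $2$-dimensional subspace $W_0\subset H_1$; writing $x\cdot y$ in coordinates as $(ac,\,ad+bc,\,bd)$ one sees that its image contains an open subset of $\R^3$, and since squares $v\cdot v$ of vectors $v$ ranging over an open set span $S^2(S^2W_0)$, the squares $(x\cdot y)^2$ with $x,y\in W_0$ already span the $6$-dimensional space $S^2(S^2W_0)$. As $S_{(2,2)}W_0\ne0$ and $S_{(2,2)}W_0\subseteq S_{(2,2)}W$ naturally, the span meets this summand nontrivially, and the span is therefore all of $S^2(S^2H_1)$. This representation-theoretic step, and in particular reaching the $S_{(2,2)}$-component, is where I expect the real difficulty to lie, since spanning of $S^2V$ by squares of a proper subvariety of $V$ is exactly the point at which dimension counting alone fails.

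Finally I would impose the two constraints. Rescaling $x\mapsto x/\omega(x,y)$ gives $\span\{(x\cdot y)^2:\omega(x,y)=1\}=\span\{(x\cdot y)^2:\omega(x,y)\ne0\}$, and since $\{\omega\ne0\}$ is Zariski dense in $H_1\times H_1$ a linear form on $S^2(S^2H_1)$ vanishing on the latter family composes with the polynomial map $(x,y)\mapsto(x\cdot y)^2$ to a polynomial vanishing on a dense set, hence identically; so this span is again everything. To reach integral pairs, note that $\sp(2g,\Z)$ acts transitively on integral symplectic pairs, so their squares span an $\sp(2g,\Z)$-invariant subspace; as $\sp(2g,\Z)$ is Zariski dense in $\sp(2g,\R)$ this subspace is $\sp(2g,\R)$-invariant, and since $\sp(2g,\R)$ acts transitively on real symplectic pairs the square of one fixed integral pair is, by the previous paragraph, an $\sp(2g,\R)$-cyclic vector. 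Hence the integral squares already span $S^2(S^2H_1)$, and extracting a basis produces the desired set $S$ of cardinality $g(2g+1)(2g^2+g+1)/2$. By contrast with the representation-theoretic core, these density and transitivity steps are routine.
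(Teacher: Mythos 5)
Your proof is correct, and it establishes a genuinely stronger statement than the paper's own argument does, by a partially different route. The tail of your argument (rescaling to $\omega(x,y)=1$, Zariski density of $\mathrm{Sp}(2g,\mathbb{Z})$ in $\mathrm{Sp}(2g,\mathbb{R})$, transitivity of $\mathrm{Sp}(2g,\mathbb{R})$ on real symplectic pairs, density of $\{\omega\neq 0\}$) is exactly the paper's entire proof, run in the dual direction: the paper takes a quadratic form $Q$ vanishing at all integral symplectic pairs and uses these facts to conclude $Q(x,y)=0$ whenever $\omega(x,y)\neq 0$, hence for all $(x,y)$ by density. But that chain only gives vanishing of $Q$ at decomposable tensors $x\otimes y$, which form a proper subvariety of $S^2H_1(\Sigma,\R)$, and the paper then simply writes ``$Q=0$''; this suffices for the application in Proposition \ref{engendrecone}, where the biquadratic forms $Q_{u,v}$ are only ever evaluated at decomposables, but it is weaker than the literal definition of quadratic generation. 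Your representation-theoretic core --- the plethysm $S^2(S^2W)\cong S_{(4)}W\oplus S_{(2,2)}W$, surjectivity onto $S_{(4)}W$ via the multiplication map, and a nonzero $S_{(2,2)}$-component produced inside a $2$-dimensional subspace --- is precisely the statement the paper elides: no nonzero quadratic form on $S^2H_1(\Sigma,\R)$ vanishes on the whole rank-$\leq 2$ locus (equivalently, the degree-$2$ part of the ideal of that determinantal variety is zero). So your route costs some representation theory but delivers the lemma under its strong reading and justifies the cardinality count $g(2g+1)(2g^2+g+1)/2$ without implicitly identifying quadratic forms on $S^2H_1$ with their restrictions to decomposables, whereas the paper's route is shorter, more elementary, and adequate for how the lemma is actually used. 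One small remark: the transitivity of $\mathrm{Sp}(2g,\mathbb{Z})$ on integral symplectic pairs, which you invoke, is true (such a pair spans a unimodular sublattice, hence extends to an integral symplectic basis) but is not needed --- invariance of the span of the integral squares already follows from the fact that the set of integral symplectic pairs is $\mathrm{Sp}(2g,\mathbb{Z})$-stable.
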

Again we postpone the proof this lemma to the end of the section. Observe that the proposition follows by considering any quadratically generating set $S$. By the assumption of the proposition we have for any $u,v$ and any $x_i\otimes y_i\in S$ the equality $Q_{u,v}(x_i,y_i)=0$. The generating property implies that $Q_{u,v}=0$ and Lemma \ref{calcul} implies the result.

\end{proof}

\begin{proof}[Proof of Lemma \ref{calcul}:]
Let $(\xi_i)_{i\in \Z_3}$ be a basis of $\g$ so that $[\xi_i,\xi_{i+1}]=\xi_{i+2}$ and normalize the trace so that $\tr(\xi_i\xi_j)=\delta_{ij}$ for all $i,j\in\Z_3$.  

Write $u=\sum_i u_i\xi_i$ for $u_i\in H^1(\Sigma,\R)$. The hypothesis on $u$ imply that the $u_i$ are non zero and mutually orthogonal. Fix a basis $(e_i)_{0\le i< n}$ of $H_1(\Sigma,\R)$ so that $u_i=e_i^*$ for $i=0,1,2$. 
Then $v=\sum_{i\in\Z_3}\sum_{j<n}v_i^l e_l^*\otimes \xi_i$, $x=\sum_l x_l e_l$ and $y=\sum_l y_l e_l$. 

From now on, $i\in \Z_3$ and $0\le l<n$. We have $u(x)=\sum_i x_i \xi_i$ and $v(x)=\sum_{i,l} v_i^lx_l\xi_i$ so that:

$$[u(x),u(y)]=\sum_i(x_iy_{i+1}-x_{i+1}y_i)\xi_{i+2}$$
$$[u(x),v(y)]=\sum_iy_l(x_iv_{i+1}^l-x_{i+1}v_i^l)\xi_{i+2}$$
This gives the identity
$$\sum_{i,l}\left((x_iy_{i+1}-x_{i+1}y_i)(x_iy_lv_{i+1}^l-x_{i+1}y_lv_i^l-x_ly_iv_{i+1}^l+x_ly_{i+1}v_i^l)\right)=0$$

This biquadratic polynomial in $x$ and $y$ vanishes identically if and only if all its coefficient vanish.
\begin{itemize}
\item[-]The coefficient of $x_i^2y_{i+1}y_l$ for $l>3$ is $v_{i+1}^l$ showing that $v_i^l=0$ for all $i$ and $l>3$. 
\item[-]The coefficient of $x_i^2y_{i+1}^2$ is $v_{i+1}^{i+1}+v_{i}^{i}$ showing that $v_i^i=0$ for all $i$. 
\item[-]The coefficient of $x_i^2y_{i+1}y_{i-1}$ is $v_{i+1}^{i-1}+v_{i-1}^{i+1}$.
\end{itemize}
This show that the matrix $v_i^j$ is antisymmetric so that the set of solutions have dimension 3 as the set of infinitesimal symmetries. 
\end{proof}

\begin{proof}[Proof of Lemma \ref{quadraticgeneration}:]
The vector space $\mathcal{Q}$ of quadratic forms on $S^2H^1(\Sigma,\R)$ has dimension $N=g(2g+1)(2g^2+g+1)/2$.   
Each  symplectic lattice $\Lambda_i=\span(x_i,y_i)$ 
acts linearly on $\mathcal{Q}$ by evaluation 
at $x_i\otimes y_i$. If these linear forms generate the dual of 
$\mathcal{Q}$,  then a subset of size $N$ of them will do. 

It remains to prove that these linear forms indeed generate the dual 
of $\mathcal{Q}$.  Let then $Q\in  \mathcal{Q}$ such that, 
for any $x,y\in H_1(\Sigma,\Z)$ satisfying 
$\omega(x,y)=\pm 1$ we have $Q(x,y)=0$. 

Let $(x_0,y_0)$ be such an integral symplectic basis. Then 
the map $\sp(2g,\R)\to \R$ sending $A$ to $Q(Ax_0,Ay_0)$ 
is algebraic. This map  vanishes on $\sp(2g,\Z)$, which is 
Zariski dense in $\sp(2g,\R)$ and hence, it vanishes identically. 
As the group $\sp(2g,\R)$ acts transitively on  the set of 
pairs $(x,y)\in H_1(\Sigma,\R)$ satisfying $\omega(x,y)=1$,  
we get $Q(x,y)=0$ for such pairs. 
As $Q$ is homogeneous, we have the same result for  all $(x,y)$ such that 
$\omega(x,y)\ne 0$. By density, we get $Q=0$.
\end{proof}

Let $S$ be a set of simple curves $\gamma_i$ 
bounding 1-holed tori in $\Sigma$. We will say that this set is quadratically generating if the corresponding  2-dimensional symplectic lattices are quadratically generating.

\begin{proposition}\label{engendremodule}
Let $S$ be a quadratically generating set of curves, and $f_\gamma:\mo(\Sigma,G)\to \R$ be 
the associated trace functions. 

For any conical neighborhood $V$ of the subset of reducible representations in $\mo(\Sigma,G)$ there is a neighborhood $U$ of the trivial representation 
in $\mo(\Sigma,G)$ so that for any $x\in U\setminus V$ 
the derivatives of $f_\gamma$ at $x$, with $\gamma\in S$, 
generate the cotangent space $T_x^*\mo(\Sigma,G)$. 
\end{proposition}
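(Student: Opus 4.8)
The plan is to transfer the infinitesimal statement of Proposition \ref{engendrecone}, which concerns the model functions $g_\gamma$ on the tangent cone $C$, to the genuine trace functions $f_\gamma$ on $\mo(\Sigma,G)$, using the local analytic model of Goldman and Millson together with a homogeneity-plus-compactness argument to absorb the higher-order tails.

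First I would invoke the reformulated Goldman--Millson proposition to identify a neighborhood of the trivial representation in $\mo(\Sigma,G)$ with a neighborhood of $0$ in $C/G$ via $[u]\mapsto[\hol\circ F(u)]$. Under this identification each $f_\gamma$ pulls back to $\tilde f_\gamma(u)=\tr\hol_\gamma F(u)$, which by Proposition \ref{taylor} equals $2+\tfrac{1}{16}g_\gamma(u)+O(|u|^5)$, since every $\gamma\in S$ bounds a one-holed torus and is therefore separating. At a surjective $u\in C_{\rm irr}$ the quotient $C_{\rm irr}/G$ is smooth with tangent space $\{v:[u\wedge v]=0\}/\{[\xi\wedge u]\}$, and I would check that $g_\gamma$ descends to it, being $\ad$-invariant and depending only on the class $u$. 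A conical neighborhood $V$ of the reducible locus corresponds to a scale-invariant neighborhood of the non-surjective elements of $C$, so after normalizing $|u|=1$ the points of $U\setminus V$ project into a fixed compact set $K$ on the unit sphere of $C_{\rm irr}$, bounded away from the non-surjective directions.

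The central mechanism is homogeneity. As $g_\gamma$ is homogeneous of degree $4$, the conclusion of Proposition \ref{engendrecone} is invariant under $u\mapsto tu$, $\xi\mapsto t\xi$, and therefore holds for \emph{every} surjective $u\in C_{\rm irr}$, not merely near $0$. Hence for each $u\in K$ the differentials $D_u g_\gamma$, $\gamma\in S$, span $T^*_{[u]}(C_{\rm irr}/G)$; equivalently the leading-order map $(g_\gamma)_{\gamma\in S}$ is an immersion at $[u]$. By continuity and compactness of $K$ this spanning is uniform, i.e. the smallest singular value of $v\mapsto\bigl(D_u g_\gamma(v)\bigr)_{\gamma\in S}$ restricted to $T_{[u]}(C_{\rm irr}/G)$ has a positive lower bound on $K$. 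Since $D_u g_\gamma$ is homogeneous of degree $3$ while $d\bigl(\tilde f_\gamma-\tfrac{1}{16}g_\gamma\bigr)=O(|u|^4)$, restricting to $|u|=r$ exhibits the remainder as an $O(r)$ perturbation of the leading symbol. The standard principle that uniform invertibility survives a sufficiently small perturbation then shows that for $U$ small enough the differentials $df_\gamma$ still span $T^*_x\mo(\Sigma,G)$ at every $x\in U\setminus V$.

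The main obstacle will be making the uniform-spanning and perturbation step rigorous across the singular quotient $C/G$. I expect the delicate points to be: verifying that $C_{\rm irr}/G$ is genuinely smooth near the directions in $K$, so that cotangent spaces and singular values are well defined; confirming that the gauge-fixing used in Proposition \ref{taylor} to reduce $\tilde f_\gamma$ to $g_\gamma$ is compatible with differentiating along $T_{[u]}(C_{\rm irr}/G)$; and controlling the $O(|u|^5)$ tail uniformly in the direction of $u$, so that after rescaling the remainder is genuinely dominated by the degree-$4$ leading term on all of $K$ simultaneously.
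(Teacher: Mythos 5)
Your proposal is correct and follows essentially the same route as the paper: identify a neighborhood of the trivial representation with a neighborhood of $0$ in $C/G$ via the Goldman--Millson map $F$, apply the Taylor expansion of Proposition \ref{taylor} so that $16\,D_u(f_\gamma\circ F)=D_u g_\gamma+O(|u|^4)$, use homogeneity of $g_\gamma$ together with Proposition \ref{engendrecone} and compactness of the unit sphere minus the conical neighborhood $V$ to get a uniform lower bound on the spanning of the leading terms, and absorb the higher-order remainder by a perturbation argument at small scale. The only difference is cosmetic: the paper quantifies uniform spanning by summing Gram determinants over subsets $S'\subset S$ of cardinality $6g-6$ (obtaining homogeneous functions $\Phi$, $\Psi$ with $\Psi=\Phi+O(|u|^{N+1})$), whereas you use a lower bound on the smallest singular value; these are equivalent devices.
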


\begin{remark} 
Observe that $\mo(\Sigma,G)$ is locally modeled on the cone $C/G$. By conical neighborhood $V$ we mean that $V$ contains some neighborhood invariant by scaling in the model. This is indeed independent on the real analytic diffeomorphism we choose. 
\end{remark}
   
\begin{proof}
The proposition is equivalent to saying that for 
$\rho\in\rep(\Sigma,G)$ close enough to the trivial representation but far enough from reducible representations, 
the  intersection of the kernels of  the derivatives of the functions 
$f_\gamma$, with $\gamma\in S$, is precisely  the tangent 
space of the $G$ action.  
Recall that the map $F:C\to \rep(\Sigma,G)$ is a $G$-equivariant real analytic diffeomorphism, hence it is sufficient to prove the analogous claim 
for $f_\gamma\circ F$.  
From Proposition \ref{taylor}, we have at each point $u$ of $C$:
$$f_\gamma(F(u))=2 + \frac{1}{16}g_\gamma(u)+O(|u|^5).$$
As $F$ is a smooth function of $u$, we have 
\begin{equation}\label{dl}
16D_u (f_\gamma\circ F)=D_u g_\gamma+O(|u|^4).
\end{equation}
By Proposition \ref{engendrecone}, there is a neighborhood 
$U$ of $0$ in $C$ such that for all $u$ in $C_{\rm irr}\cap U$, the 
derivatives of $g_\gamma$, $\gamma\in S$, 
generate the cotangent space of $C$ at $u$, modulo the 
3-dimensional space of elements of the form $[\xi,u]$, with 
$\xi\in H^0(\Sigma,\mathfrak g)$. 
This means that we can find a set $S'\subset S$ of cardinal $6g-6$,  
such that the derivatives of $g_\gamma$, $\gamma\in S'$,  
are linearly independent at $u$. This is an open condition and thus 
it holds true for some open neighborhood $U_{S'}$ containing $u$.
In particular $\{U_{S'}\}$, for $S'\subset S$ (of cardinal $6g-6$) 
forms an open covering of $C_{\rm irr}\cap U$.  
Taking an auxiliary euclidean structure in $H^1(\Sigma,\g)$, 
this last condition can be expressed by saying that the Gram 
determinant $\textrm{Gram}(D_u g_\gamma)_{\gamma\in S'}$ of 
$(D_u g_\gamma)_{\gamma\in S'}$ relative to the 
euclidean structure is positive on $U_{S'}$. Thus 
$\Phi(u)=\sum_{S'\subset S} \textrm{Gram}(D_u g_\gamma)_{\gamma\in S'}$ is 
positive on $C_{\rm irr}\cap U$.

Define $\Psi(u)=\sum_{S'\subset S} \textrm{Gram}(16 D_u f_\gamma\circ F)_{\gamma\in S'}$. By reversing the argument above, we need to prove that there is a neighborhood $U'$ of $0$ in $C$ so that for any $u\in U'\setminus V$ one has $\Psi(u)>0$. 

We observe that $D_ug_\gamma$ is cubic in $u$ and so $\Phi(u)$ is homogeneous of degree $N=36(g-1)$. By using Equation \eqref{dl} and expanding the Gram determinant, we find that 
\begin{equation}\label{grando}
\Psi(u)=\Phi(u)+O(|u|^{N+1}).
\end{equation}

Let $V$ be the conic neighborhood of $C\setminus C_{\rm irr}$ given in the statement and $S$ be the compact set defined by $S=\{u\in C, |u|=1\}$. As $S\setminus V$ is a compact subset of $C_{\rm irr}$, there exists $\epsilon>0$ so that $\Phi>\epsilon$ on $S\setminus V$. By homogeneity we get $|\Phi(u)| >\epsilon|u|^N$ on $C_{\rm irr}\setminus V$ which together with Equation \eqref{grando} proves the result. 

\end{proof}
\begin{remark}\label{connexite} We will call an open set of the form $U\setminus \overline{V}$ given by the above proposition a "neighborhood of the trivial representation". We can use these neighborhoods to define a topology on $\mo_{\rm irr}(\Sigma,G)\cup\{1\}$. A crucial observation is that these open sets are connected, as $C_{\rm irr}$ is locally connected around the trivial 
representation. In fact, according to Lemma \ref{calcul} 
a point of $C_{\rm irr}$ is given by a triple of pairwise orthogonal 
elements of $H^1(\Sigma; \R)$ and the symplectic group ${\rm Sp}(2g,\R)$ acts 
transitively on the space of such triples.  
\end{remark}

\section{Ergodicity of the Johnson subgroup action on $\mo(\Sigma,\su)$}

Fix a set of separating curves $S$ of genus one which are quadratically generating. 
Let $\mathcal K_1(\Sigma;S)\subset \mathcal K(\Sigma)$ 
denote the normal subgroup of $\M(\Sigma)$ generated by the Dehn twists 
along the curves $\gamma\in S$. Further define, for all $n\geq 1$,  
$\mathcal K_{n+1}(\Sigma; S)$ as being the normal subgroup in 
$\mathcal K_n(\Sigma; S)$ generated by the Dehn twists along curves 
in $S$. The result of Theorem \ref{main} follows from the more general: 

\begin{proposition}\label{bootstrap}
For each $n\geq 1$ the action of $\mathcal K_n(\Sigma, S)$ 
on $\mo(\Sigma,\su)$ is ergodic. 
\end{proposition}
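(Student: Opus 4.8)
The plan is to prove Proposition \ref{bootstrap} by induction on $n$, the engine being a single \emph{relative} ergodicity statement: if a subgroup $G\le\M(\Sigma)$ acts ergodically on $\mo(\Sigma,\su)$ and contains the Dehn twists $T_\gamma$, $\gamma\in S$, and if $H\triangleleft G$ is a normal subgroup that also contains these twists, then $H$ acts ergodically. Granting this, I would apply it with $G=\mathcal K_n(\Sigma;S)$ and $H=\mathcal K_{n+1}(\Sigma;S)$: by construction the latter is normal in the former and contains the generating twists, so ergodicity propagates down the tower. The base case $G=\M(\Sigma)$, $H=\mathcal K_1(\Sigma;S)$ uses Goldman's theorem \cite{Go2} that $\M(\Sigma)$ itself acts ergodically. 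I emphasize that the tower is genuinely needed here, because $\mathcal K_{n+1}$ is normal only one level up, in $\mathcal K_n$, and not in $\M(\Sigma)$; this is why one cannot simply invoke the base case at every stage.

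To prove the relative statement, let $A$ be an $H$-invariant measurable set and put $\phi=\chi_A$; ergodicity of $H$ amounts to $\mu(A)\in\{0,1\}$. Since each $T_\gamma\in H$, the function $\phi$ is invariant under $T_\gamma$. Here I would use Goldman's description of the Dehn twist along a separating curve as a time map of the Hamiltonian \emph{twist flow} $\Xi^\gamma_t$ of the trace function $f_\gamma$: writing the holonomy angle as $\theta$ with $f_\gamma=2\cos\theta$, the twist $T_\gamma$ moves a representation by the flow through an angle proportional to $\theta$, so on the full-measure set where $\theta/\pi$ is irrational the powers $T_\gamma^k$ are dense in the flow orbit. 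Consequently $\phi$ is invariant under the entire flow of $f_\gamma$, equivalently $\{f_\gamma,\phi\}=0$, for every $\gamma\in S$. By Proposition \ref{engendremodule} there is a neighborhood $\Omega=U\setminus\overline{V}$ of the trivial representation $\rho_0$ on which the differentials $df_\gamma$, $\gamma\in S$, span the cotangent space; hence the Hamiltonian vector fields $X_{f_\gamma}$ span the tangent space at each point of $\Omega$, and a measurable function invariant under all of these flows is almost everywhere constant on $\Omega$. Since $\Omega$ is connected (Remark \ref{connexite}), this constant is unique, so $\phi\equiv c_0$ almost everywhere on $\Omega$ for some $c_0\in\{0,1\}$.

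Next I would propagate this local constancy to full measure using normality and ergodicity of the ambient group $G$. For $h\in G$, normality of $H$ makes $hA$ again $H$-invariant, so the same local analysis applies to $\chi_{hA}$: it equals a constant $c_h$ almost everywhere on $\Omega$, i.e. $\chi_A\equiv c_h$ on $h^{-1}\Omega$. Because $h$ fixes $\rho_0$ and preserves the (measure-zero) locus of reducible representations, both $\Omega$ and $h^{-1}\Omega$ are conical punctured neighborhoods of $\rho_0$ omitting only small conical neighborhoods of the reducibles, so they meet in positive measure; therefore $c_h=c_0$ for every $h\in G$. If $c_0=1$ this gives $A\supseteq h^{-1}\Omega$ mod null for all $h$, so $A$ contains the $G$-invariant open set $G\cdot\Omega$, which has positive and hence, by ergodicity of $G$, full measure; thus $\mu(A)=1$. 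If $c_0=0$ the identical argument yields $\mu(A)=0$. This proves that $H$ is ergodic and closes the induction, exactly in the spirit of \cite{GoX}.

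The hard part will be the transition from invariance under the single mapping class $T_\gamma$ to invariance under the continuous twist flow of $f_\gamma$: this rests on Goldman's identification of the twist with a time map of the flow together with the almost-everywhere irrationality of the holonomy angle $\theta/\pi$, and care is needed on the locus where $f_\gamma=\pm2$ (central holonomy), which must be shown to be null on each flow line. A secondary, more technical point is the matching of the constants $c_h$ in the base case $G=\M(\Sigma)$, where $h$ acts on the tangent cone through $\sp(2g,\Z)$ and so does not fix it pointwise; there one must check that, $h$ being fixed, the neighborhood $\Omega$ can be taken small enough that $\Omega$ and $h^{-1}\Omega$ still overlap in positive measure, which is possible since $h$ is a linear automorphism preserving the reducible directions. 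For the inductive steps this subtlety disappears, because $\mathcal K_n\subset$ Torelli acts trivially on $H_1(\Sigma,\R)$ and hence trivially on the tangent cone at $\rho_0$, so $h^{-1}\Omega$ and $\Omega$ share the same tangent cone and the overlap is automatic.
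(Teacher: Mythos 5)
Your proposal is correct and rests on the same skeleton as the paper's proof: induction on $n$ via a relative ergodicity statement ($G$ ergodic, $H\triangleleft G$ containing the twists along $S$ implies $H$ ergodic, with base case Goldman's theorem), conversion of Dehn-twist invariance into a.e.\ invariance under the Goldman flows using irrationality of the rotation angle (the paper, like you, defers this to \cite[Prop.~5.4]{GoX}, so the step you flag as ``the hard part'' is exactly the imported ingredient), and local constancy near the trivial representation from Proposition \ref{engendremodule} together with connectivity (Remark \ref{connexite}). Where you genuinely diverge is the globalization step, i.e.\ how normality and ergodicity of the ambient group are exploited. The paper moves the \emph{curves}: it introduces the orbit sets $\mathcal A_n(S)$ of $S$ under $\mathcal K_{n-1}(\Sigma;S)$, defines the invariant open set $U_n$ of irreducible points where the differentials $D_\rho f_\gamma$, $\gamma\in\mathcal A_n(S)$, span the cotangent space, takes the connected component $U_n^0$ containing a punctured neighborhood of the trivial representation, uses the induction hypothesis to make $U_n^0$ co-null, and applies the transitivity Lemma \ref{transitive} on all of $U_n^0$. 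You instead move the \emph{neighborhood}: normality makes $hA$ invariant under $H$, so $\chi_A$ is a.e.\ constant on each translate $h^{-1}\Omega$ of the fixed neighborhood $\Omega$, the constants are matched through overlaps, and the $G$-invariant union $\bigcup_{h\in G}h^{-1}\Omega$ is promoted to full measure by ergodicity of $G$. Your route buys a more self-contained argument (no orbit curve systems $\mathcal A_n(S)$, no connected-component bookkeeping, and Lemma \ref{transitive} is only ever applied on the fixed small set $\Omega$), at the cost of needing the overlap fact $\Omega\cap h^{-1}\Omega\neq\emptyset$ for each $h\in G$; the paper's route replaces constant-matching by connectivity of a single co-null set, but its unproved assertion that $U_n^0$ is invariant under the ambient group conceals essentially the same overlap question (two punctured conical neighborhoods of the trivial representation related by a mapping class must meet). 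You are in fact more explicit than the paper on this point, and your resolution is sound: since the sets are open in the irreducible locus, nonempty intersection already gives positive measure; for the inductive steps the Torelli property forces the action on the tangent cone to have identity differential, so the overlap is automatic; and in the base case the $\sp(2g,\Z)$-action preserves the reducible directions, so after shrinking $\Omega$ for each fixed $h$ (legitimate, since a.e.-constancy passes to positive-measure subsets) the translated punctured cones still meet.
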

\begin{proof}
For $n\geq 1$ we denote by   
$\mathcal A_n(S)$ the set of orbits of curves in $S$ under 
the action of $\mathcal K_{n-1}(\Sigma;S)$, where $\mathcal K_0(\Sigma;S)$ 
states for $\M(\Sigma)$.   Define the set: 
$$U_n=\{\rho \in \mo_{\rm irr}(\Sigma,G)\text{ such that }\span\{D_\rho f_\gamma\text{ for }\gamma\in \mathcal A_n(S)\}=T^*_\rho\mo(\Sigma,G)\}.$$

As this is an open condition $U_n$ is open. Moreover 
$U_1$ is invariant by the mapping class group. 

We will prove the claim by recurrence on $n$ by means of a 
bootstrap argument. Consider first $n=1$. 
Remark \ref{connexite} directly implies that 
there is a connected neighborhood $V$ of the trivial representation  
in $\mo(\Sigma,G)$ so that 
$V\cap\mo_{\rm irr}(\Sigma,G)\subset U_1$. 
Let $U^0_1$ be the unique connected component of $U_1$ which contains 
$V$. The open set $U^0_1$ is then non empty and 
invariant by the mapping class group. By the ergodicity of the mapping 
class group on $\mo(\Sigma, \su)$, proved by Goldman in \cite{Go2}, 
the  complement of  $U^0_1$ has measure 0. 
The end of the proof for $n=1$ follows from the 
arguments of Goldman and Xia in \cite{GoX}. 
Specifically, one key ingredient is the infinitesimal transitivity 
Lemma 3.2 from \cite{GoX}, which we state 
here for the sake of completeness: 

\begin{lemma}\label{transitive}
Let $X$ be a connected symplectic manifold and $\mathcal F$ be a set of functions 
such that their differential at all points of $X$ span the cotangent space.
Then the group generated by the Hamiltonian flows associated to the functions 
in $\mathcal F$  acts transitively on $X$.  
\end{lemma}

The Hamiltonian flow associated to the trace 
function  $f_{\gamma}$ 
is covered by the hamiltonian flow $\Phi_\gamma^t$ of $h_\gamma=\arccos(f_{\gamma}/2)$, the so-called Goldman twist flow 
(see e.g. \cite{Go2,GoX}) defined by $\gamma$. The flow $\Phi_\gamma$ gives a circle action of period $\pi$ on an open dense subset of 
$\mo(\Sigma, \su)$. The action of the Dehn twist
along the separating simple curve $\gamma$ on $\mo(\Sigma, \su)$ is identified 
with  the time $h_{\gamma}$ of 
the Goldman twist flow. 

This circle action is a rotation of angle 
$h_\gamma$ and therefore the 
Dehn twist along $\gamma$ acts ergodically on the orbit of $\rho\in \mo(\Sigma, \su)$ 
under the Goldman twist flow defined by $\gamma$, for all $\rho$ with irrational 
$\frac{h_\gamma(\rho)}{\pi}$.  
Now, this implies (see \cite[Proposition 5.4]{GoX}) that any measurable function on $\mo(\Sigma, \su)$ 
which is invariant by the action of the Dehn twist along $\gamma$ 
should be constant on the orbits of the Goldman twist flow defined by $\gamma$ 
outside a nullset of $\mo(\Sigma, \su)$. Therefore, 
any measurable function on $\mo(\Sigma, \su)$ which is invariant by the 
group $\mathcal K_1(\Sigma;S)$
should be invariant by the group generated 
by the Hamiltonian flows associated to the functions $f_{\gamma}$, 
for $\gamma\in S$, almost everywhere.   
Then, by the transitivity Lemma \ref{transitive} 
and Proposition \ref{engendremodule} it must be constant on 
$U_1^0$ almost everywhere. 
This establishes the claim for $n=1$.

Assume now that the claim holds for $n$. 
One can find  a connected neighborhood 
$V_n$ of the trivial representation  in $\mo(\Sigma,G)$ so that 
$V_n\subset U_n$ using again Remark \ref{connexite}. 
Let $U^0_n$ be the unique connected component of $U_n$ which contains $V_n$. 
The open set $U^0_n$ is then non empty and 
invariant by the group $\mathcal K_n(\Sigma; S)$. 
By using the ergodicity of the  $\mathcal K_n(\Sigma; S)$ action 
on $\mo(\Sigma, \su)$, which is the induction hypothesis, 
the complement of $U^0_n$ has measure 0. 
Then again the arguments from \cite{GoX} imply that 
the $\mathcal K_{n+1}(\Sigma; S)$ action is ergodic.  
\end{proof}

\begin{remark}
The subgroup $\Gamma(S)$  generated by the Dehn twists 
along curves in $S$ is contained in the intersection
$\cap_{n\geq 1} K_n(\Sigma;S)$, but we don't know whether 
the inclusion is strict. One also might speculate that
$\Gamma(S)$ acts ergodically on $\mo(\Sigma,\su)$.  
\end{remark}

\bibliographystyle{plain}

\end{document}